\theoremstyle{plain}
\newtheorem{theorem}{Theorem}
\newtheorem*{theorem*}{Theorem}
\newtheorem{corollary}{Corollary}
\newtheorem{definition}{Definition}
\newtheorem{example}{Example}
\numberwithin{equation}{section}
\begin{document}

\title{Some results on Whitney numbers of Dowling lattices}

\author{Mourad Rahmani} \email{mrahmani@usthb.dz}
\address{USTHB, Faculty of Mathematics,
P. O. Box 32, El Alia,16111, Algiers, Algeria}

\begin{abstract}
In this paper, we study some properties of Whitney numbers of Dowling
lattices and related polynomials. We answer the following question: there is
relation between Stirling and Eulerian polynomials. Can we find a new
relation between Dowling polynomials and other polynomials generalizing
Eulerian polynomials? In addition, some congruences for the Dowling
numbers are given.
\\
\emph{Keywords: }
Bell polynomials, congruences, Dowling lattices, Eulerian polynomials, Hankel
determinant, Whitney numbers.
\\
\emph{Mathematics Subject Classification 2010: } 05B35, 05A15, 11B73.
\end{abstract}
\maketitle

\section{Introduction}
In 1973, Dowling \cite{Dowling} introduced a class of geometric lattices called the
Dowling lattices. These lattices denoted $Q_{n}\left( G\right) $ are indexed
by a positive integer $n$ (rank) and a finite group $G$ of order $m\geq 1.$
The most important example of Dowling lattices is obtained by letting $G$ be
the trivial group $(e)$, then $Q_{n}\left( e\right) $ is the geometric
lattice of partitions $\Pi _{n+1}$ of the set $\{0,1,\ldots ,n\}.$

\bigskip Using M\"{o}bius function of a finite partially order set, Dowling gave
the characteristic polynomial of $Q_{n}\left( G\right) $
\[
P_{n}\left( \upsilon ;m\right) =m^{n}\left( \frac{\upsilon -1}{m}\right)
_{n},
\]%
where $\left( x\right) _{n}$ is the falling factorial defined by $\left( x\right) _{n}=x(x-1)\cdots(x-n+1)$, $\left( x\right) _{0}=1$.

It is well known that the Whitney numbers of the first kind $w_{m}\left(
n,k\right) $ are the coefficient of $\upsilon ^{k}$ of the characteristic
polynomial $P_{n}\left( \upsilon ;m\right) $ of $Q_{n}\left( G\right) $ and
Whitney numbers of the second kind $W_{m}\left( n,k\right) $ are the number
of elements of corank $k$ of $Q_{n}\left( G\right) .$ Dowling proved that
the Whitney numbers of Dowling lattices of both kinds satisfy the
orthogonality relations and also satisfy the following recursions%
\[
w_{m}\left( n,k\right) =\left( 1+m\left( n-1\right) \right) w_{m}\left(
n-1,k\right) +w_{m}\left( n-1,k-1\right)
\]%
and%
\[
W_{m}\left( n,k\right) =\left( 1+mk\right) W_{m}\left( n-1,k\right)
+W_{m}\left( n-1,k-1\right) .
\]

In 1996, Benoumhani \cite{benoumhani1,benoumhani3} established most properties (generating functions,
explicit formulas, recurrence relations, congruences, concavity) of Whitney
numbers of Dowling lattices. He also introduced two kinds
of polynomials \cite{benoumhani1,benoumhani2} related to Whitney numbers of Dowling lattices: the Dowling polynomials $D_{m}\left( n,x\right) $ and
Tanny-Dowling polynomials $\mathcal{F}_{m}\left( n,x\right) $.
The results reported in the present paper are complementary to those
obtained by Benoumhani and make points, especially in Eulerian-Dowling
polynomials. More precisely, the question which was asked by Benoumhani in
\cite{benoumhani1,benoumhani2} is: There is relation between Stirling and Eulerian polynomials. Can we find a
new relation between $\sum\limits_{k}W_{m}\left( n,k\right) x^{k}$ and
other polynomials generalizing Eulerian polynomials? The answer to the previous question is yes.

The present paper is organized as follows. We first introduce in section $2$, our notation and definitions. Then we present in section $3$ some
properties and some combinatorial identities related to the Dowling
polynomials and the Tanny-Dowling polynomials. The answer to the previous question
is in section $4$. Some congruences for
Dowling numbers are presented in section $5$. Finally, the $r$-Dowling numbers are also considered in section $6$.

\section{Definitions and notation}
In this section, we introduce some definitions and notation which are useful in the rest of the paper. The (signed) Stirling numbers of the first kind $s\left(  n,k\right)  $ are
the coefficients in the expansion%
\[
\left(  x\right)  _{n}=%
{\displaystyle\sum\limits_{k=0}^{n}}
s\left(  n,k\right)  x^{k}.
\]
Thus $s\left(  0,0\right)  =1,$ but $s\left(  n,0\right)  =0$ for $n\geq1,$
and it is also convenient to define $s\left(  n,k\right)  =0$ if $k<0$ or
$k>n.$ The recurrence%
\begin{equation}
s\left(  n+1,k\right)  =s\left(  n,k-1\right)  -n\text{ }s\left(  n,k\right)\label{st1}
\end{equation}
is well known and easy to see, and we also have the generating function%
\begin{equation}
\frac{1}{k!}\left(  \ln\left(  1+x\right)  \right)  ^{k}=%
{\displaystyle\sum\limits_{n\geq k}}
s\left(  n,k\right)  \frac{x^{n}}{n!}.\label{firstkind2}
\end{equation}

The Stirling numbers of the second kind, denoted $S\left(  n,k\right)  ,$ appear
as coefficients when converting powers to binomial coefficients%
\[
x^{n}=%
{\displaystyle\sum\limits_{k=0}^{n}}
k!S\left(  n,k\right)  \binom{x}{k}.
\]
They have a combinatorial interpretation involving set partitions.
Specifically,  $S\left(  n,k\right)  ,$ is the number of ways to partition a
set of $n$ elements into exactly $k$ nonempty subsets $\left(  0\leq k\leq
n\right)  $. The Stirling numbers of the second kind can be enumerated by the
following  recurrence relation%
\[
S\left(  n+1,k\right)  =kS\left(  n,k\right)  +S\left(  n,k-1\right)  ,
\]
or explicitly%
\[
S\left(  n,k\right)  =\frac{1}{k!}%
{\displaystyle\sum\limits_{j=1}^{k}}
\left(  -1\right)  ^{k-j}\binom{k}{j}j^{n}.
\]

The number of all partitions is the Bell number $\phi_{n},$ thus%
\[
\phi_{n}=%
{\displaystyle\sum\limits_{k=0}^{n}}
S\left(  n,k\right)  .
\]

The polynomials
\[
\phi_{n}\left(  x\right)  =%
{\displaystyle\sum\limits_{k=0}^{n}}
S\left(  n,k\right)  x^{k},
\]
are called Bell polynomials or exponential polynomials. The first few Bell polynomials are

\begin{align*}
\phi_{0}\left(  x\right)    & =1,\\
\phi_{1}\left(  x\right)    & =x,\\
\phi_{2}\left(  x\right)    & =x^{2}+x,\\
\phi_{3}\left(  x\right)    & =x^{3}+3x^{2}+x,\\
\phi_{4}\left(  x\right)    & =x^{4}+6x^{3}+7x^{2}+x.
\end{align*}

The exponential generating function for the polynomials $\phi_{n}\left(
x\right)  $ is%
\[%
{\displaystyle\sum\limits_{n\geq0}}
\phi_{n}\left(  x\right)  \frac{z^{n}}{n!}=\exp(x(e^{z}-1)).
\]

Now, if $\omega_{n}\left(  x\right)  $ and $\phi_{n}\left(  x\right)  $ are
ordinary and exponential generating functions of the sequence $k!S\left(
n,k\right)  ,$ then (cf. \cite{riordan})
\[
\omega_{n}\left(  x\right)  =%
{\displaystyle\int\limits_{0}^{+\infty}}
\phi_{n}\left(  \lambda x\right)  e^{-\lambda}d\lambda.
\]
The polynomials
\[
\omega_{n}\left(  x\right)  =%
{\displaystyle\sum\limits_{k=0}^{n}}
k!S\left(  n,k\right)  x^{k},
\]
are called geometric polynomials. The first few geometric polynomials are

\begin{align*}
\omega_{0}\left(  x\right)    & =1,\\
\omega_{1}\left(  x\right)    & =x,\\
\omega_{2}\left(  x\right)    & =2x^{2}+x,\\
\omega_{3}\left(  x\right)    & =6x^{3}+6x^{2}+x,\\
\omega_{4}\left(  x\right)    & =24x^{4}+36x^{3}+14x^{2}+x.
\end{align*}

The numbers $\omega_{n}\left(  1\right)  $ called ordered Bell numbers or
Fubini numbers, they count the number of ordered partitions of $\{1,2,\ldots,n\}$.

As Comtet in \cite[p. 244]{Comtet}, we define the Eulerian polynomials $A_{n}\left(
x\right)  $ by%
\begin{equation}
A_{n}\left(  x\right)  =\delta_{n,0}+%
{\displaystyle\sum\limits_{k=1}^{n}}
\genfrac{\langle}{\rangle}{0pt}{0}{n}{k-1}%
x^{k},\label{sm0}%
\end{equation}
where $%
\genfrac{\langle}{\rangle}{0pt}{0}{n}{k}%
$ are the Eulerian numbers. $%
\genfrac{\langle}{\rangle}{0pt}{0}{n}{k-1}%
$ is the number of permutations of length $n$ with exactly $k$ rises (i.e.,
the number of times it goes from a lower to a higher number, reading left to right).

The first few Eulerian polynomials are

\begin{align*}
A_{0}\left(  x\right)    & =1,\\
A_{1}\left(  x\right)    & =x,\\
A_{2}\left(  x\right)    & =x^{2}+x,\\
A_{3}\left(  x\right)    & =x^{3}+4x^{2}+x,\\
A_{4}\left(  x\right)    & =x^{4}+11x^{3}+11x^{2}+x.
\end{align*}
Using the Frobenius \cite{Frobenius} result
\begin{align}
A_{n}\left(  x\right)    & =\delta_{n,0}+x%
{\displaystyle\sum\limits_{k=1}^{n}}
k!S\left(  n,k\right)  \left(  x-1\right)  ^{n-k}\label{eqq1}\\
& =%
{\displaystyle\sum\limits_{k=0}^{n}}
k!S\left(  n+1,k+1\right)  \left(  x-1\right)  ^{n-k},\label{eqq2}%
\end{align}
we can easily establish the following connection between the Eulerian
polynomials and the geometric polynomials%
\[
A_{n}\left(  x\right)  =\delta_{n,0}+x\left(  x-1\right)  ^{n}\omega
_{n}\left(  \frac{1}{x-1}\right)  -x\left(  x-1\right)  ^{n},
\]
or
\begin{equation}
\omega_{n}\left(  x\right)  =\frac{x^{n+1}}{1+x}\left(  A_{n}\left(
\frac{1+x}{x}\right)  -\delta_{n,0}\right)  +1.\label{sm1}%
\end{equation}

Substituting (\ref{sm0}) in (\ref{sm1}) we get%
\[
\omega_{n}\left(  x\right)  =1+%
{\displaystyle\sum\limits_{k=0}^{n-1}}
\genfrac{\langle}{\rangle}{0pt}{0}{n}{k}%
\left(  1+x\right)  ^{k}x^{n-k},
\]
since $%
\genfrac{\langle}{\rangle}{0pt}{0}{n}{n}%
=\delta_{n,0}$\bigskip, we obtain the relationship between geometric polynomials and Eulerian numbers
\begin{equation}
\omega_{n}\left(  x\right)  =%
{\displaystyle\sum\limits_{k=0}^{n}}
\genfrac{\langle}{\rangle}{0pt}{0}{n}{k}%
\left(  1+x\right)  ^{k}x^{n-k}.\label{relation}
\end{equation}

It has been shown by Benoumhani that the first and second kind Whitney numbers
of Dowling lattices are \ defined respectively by%
\begin{align}
{\displaystyle\sum\limits_{n\geq k}}w_{m}\left(  n,k\right)  \frac{z^{n}}{n!}
&  =\frac{\left(  1+mz\right)  ^{-\frac{1}{m}}\left(  \ln\left(  1+mz\right)
\right)  ^{k}}{m^{k}k!},\label{A01}\\
{\displaystyle\sum\limits_{n\geq0}}W_{m}\left(  n,k\right)  \frac{z^{n}}{n!}
&  =\frac{e^{z}}{m^{k}k!}\left(  e^{mz}-1\right)  ^{k},\label{A02}%
\end{align}
or explicitly by%
\begin{align}
w_{m}\left(  n,k\right)    & =%
{\displaystyle\sum\limits_{i=0}^{n}}
\left(  -1\right)  ^{i-k}\binom{i}{k}m^{n-i}s\left(  n,i\right),  \label{d00}\\
W_{m}\left(  n,k\right)    & =%
{\displaystyle\sum\limits_{i=k}^{n}}
\binom{n}{i}m^{i-k}S\left(  i,k\right)  \label{d01}\\
& =\frac{1}{m^{k}k!}%
{\displaystyle\sum\limits_{i=0}^{k}}
\binom{k}{i}\left(  -1\right)  ^{k-i}\left(  mi+1\right)  ^{n}.\label{d02}%
\end{align}

For more details on Whitney numbers of Dowling lattices see \cite{benoumhani1,Dowling}.

The Dowling polynomials and Tanny-Dowling polynomials were evidently first
introduced by Benoumhani \cite{benoumhani1,benoumhani2}. They are usually defined in the following
way:%
\begin{align}
D_{m}\left(  n,x\right)   &  =%
{\displaystyle\sum\limits_{k=0}^{n}}
W_{m}\left(  n,k\right)  x^{k}\label{b02}\\
\mathcal{F}_{m}\left(  n,x\right)   &  =%
{\displaystyle\sum\limits_{k=0}^{n}}
k!W_{m}\left(  n,k\right)  x^{k}.\label{b03}%
\end{align}
It is not difficult to see that
\[
\mathcal{F}_{m}\left(  n,x\right)  =%
{\displaystyle\int\limits_{0}^{+\infty}}
D_{m}\left(  n,\lambda x\right)  e^{-\lambda}d\lambda.
\]

\section{Some proprieties of the Dowling polynomials}

\begin{theorem}
For $m\geq1$, the Whitney numbers of the second kind $W_{m}\left(  n,k\right)
$ satisfy the recursion
\begin{equation}
W_{m+1}\left(  n,k\right)  =\frac{1}{\left(  m+1\right)  ^{k}m^{n-k}}%
{\displaystyle\sum\limits_{j=0}^{n}}
\left(  -1\right)  ^{n-j}\binom{n}{j}\left(  m+1\right)  ^{j}W_{m}\left(
j,k\right)  ,\label{rec1}
\end{equation}
with $W_{1}\left(  n,k\right)  =S\left(  n+1,k+1\right)  .$
\end{theorem}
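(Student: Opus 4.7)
The plan is to prove the identity by matching exponential generating functions in $n$ (with $k$ fixed) and then handle the initial condition $W_1(n,k)=S(n+1,k+1)$ as a brief separate check.

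First I would rewrite the claimed recursion in the cleared form
\[
(m+1)^{k}m^{n-k}\,W_{m+1}(n,k)=\sum_{j=0}^{n}(-1)^{n-j}\binom{n}{j}(m+1)^{j}W_{m}(j,k),
\]
and interpret each side as $n!$ times the coefficient of $z^{n}$ in an EGF. The key observation for the right-hand side is that the sum is an inverse binomial transform of the sequence $b_{j}:=(m+1)^{j}W_{m}(j,k)$, so its EGF is simply $e^{-z}B(z)$, where $B(z)=\sum_{j\ge 0}b_{j}\,z^{j}/j!=F_{m}\!\left(k;(m+1)z\right)$ and $F_{m}(k;z):=\sum_{j\ge 0}W_{m}(j,k)\,z^{j}/j!$.

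By formula (\ref{A02}), $F_{m}(k;z)=e^{z}(e^{mz}-1)^{k}/(m^{k}k!)$. Substituting $z\mapsto(m+1)z$ and multiplying by $e^{-z}$, the EGF of the right-hand side becomes
\[
\frac{e^{mz}\bigl(e^{m(m+1)z}-1\bigr)^{k}}{m^{k}k!}.
\]
For the left-hand side, the EGF of $m^{n-k}(m+1)^{k}W_{m+1}(n,k)$ is $(m+1)^{k}m^{-k}\,F_{m+1}(k;mz)$, which by (\ref{A02}) again equals
\[
\frac{(m+1)^{k}}{m^{k}}\cdot\frac{e^{mz}\bigl(e^{(m+1)mz}-1\bigr)^{k}}{(m+1)^{k}k!}=\frac{e^{mz}\bigl(e^{m(m+1)z}-1\bigr)^{k}}{m^{k}k!}.
\]
The two EGFs coincide, which establishes the recursion after extracting $[z^{n}/n!]$ on both sides.

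For the initial value, setting $m=1$ in (\ref{A02}) gives $\sum_{n\ge 0}W_{1}(n,k)\,z^{n}/n!=e^{z}(e^{z}-1)^{k}/k!$. On the other hand, differentiating the classical identity $\sum_{n\ge 0}S(n,k+1)\,z^{n}/n!=(e^{z}-1)^{k+1}/(k+1)!$ with respect to $z$ yields $\sum_{n\ge 0}S(n+1,k+1)\,z^{n}/n!=e^{z}(e^{z}-1)^{k}/k!$, and termwise comparison gives $W_{1}(n,k)=S(n+1,k+1)$. The main (mild) obstacle is purely bookkeeping: keeping the substitutions $z\mapsto(m+1)z$ on one side and $z\mapsto mz$ on the other straight, and noticing that the factors $(m+1)^{k}/m^{k}$ and $m^{k(m+1)z}$-versus-$m(m+1)z$ exponents align so that the two EGFs collapse to the same expression.
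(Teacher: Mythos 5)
Your proof is correct, but it takes a genuinely different route from the paper's. The paper works entirely from the explicit formula (\ref{d02}): it writes $W_{m+1}(n,k)=\frac{(m+1)^{n-k}}{k!}\sum_{i}\binom{k}{i}(-1)^{k-i}\bigl(i+\tfrac{1}{m+1}\bigr)^{n}$, splits $\tfrac{1}{m+1}=\tfrac{1}{m}-\tfrac{1}{m(m+1)}$, expands the $n$-th power by the binomial theorem, and regroups the inner sum to recognize $W_{m}(n-j,k)$. You instead fix $k$ and compare exponential generating functions built from (\ref{A02}), observing that the right-hand side is an inverse binomial transform (EGF multiplied by $e^{-z}$) of the dilated sequence $(m+1)^{j}W_{m}(j,k)$, and that both sides collapse to $e^{mz}\bigl(e^{m(m+1)z}-1\bigr)^{k}/(m^{k}k!)$; the symmetry of the exponent $m(m+1)z$ under swapping the roles of $m$ and $m+1$ is exactly what makes the identity work, and your argument makes that visible in a way the paper's index manipulation does not. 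The trade-off is that the paper's computation is self-contained given (\ref{d02}) and needs no generating-function machinery, while yours is shorter and more conceptual but relies on the standard facts that dilation $b_j=c^{j}\alpha_j$ and the alternating binomial sum correspond to $z\mapsto cz$ and multiplication by $e^{-z}$ at the EGF level. Your verification of the initial condition $W_{1}(n,k)=S(n+1,k+1)$ by differentiating the EGF of $S(n,k+1)$ is also correct (the paper states this value without proof).
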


\begin{proof}
Expression (\ref{d02}) may be rewritten as%
\begin{align*}
W_{m+1}\left(  n,k\right)   &  =\frac{\left(  m+1\right)  ^{n-k}}{k!}%
{\displaystyle\sum\limits_{i=0}^{k}}
\binom{k}{i}\left(  -1\right)  ^{k-i}\left(  i+\frac{1}{m+1}\right)  ^{n}\\
&  =\frac{\left(  m+1\right)  ^{n}}{\left(  m+1\right)  ^{k}k!}%
{\displaystyle\sum\limits_{i=0}^{k}}
\binom{k}{i}\left(  -1\right)  ^{k-i}\left(  i+\frac{1}{m}-\frac{1}{m\left(
m+1\right)  }\right)  ^{n}\\
&  =\frac{\left(  m+1\right)  ^{n}}{\left(  m+1\right)  ^{k}k!}%
{\displaystyle\sum\limits_{i=0}^{k}}
\binom{k}{i}\left(  -1\right)  ^{k-i}%
{\displaystyle\sum\limits_{j=0}^{n}}
\binom{n}{j}\left(  i+\frac{1}{m}\right)  ^{n-j}\left(  -\frac{1}{m\left(
m+1\right)  }\right)  ^{j}\\
&  =\left(  m+1\right)  ^{n}%
{\displaystyle\sum\limits_{j=0}^{n}}
\binom{n}{j}\frac{m^{k}}{\left(  m+1\right)  ^{k}m^{n-j}}\left(  -\frac
{1}{m\left(  m+1\right)  }\right)  ^{j}\frac{1}{m^{k}k!}%
{\displaystyle\sum\limits_{i=0}^{k}}
\binom{k}{i}\left(  -1\right)  ^{k-i}\left(  mi+1\right)  ^{n-j}\\
&  =\frac{1}{\left(  m+1\right)  ^{k}m^{n-k}}%
{\displaystyle\sum\limits_{j=0}^{n}}
\left(  -1\right)  ^{j}\binom{n}{j}\left(  m+1\right)  ^{n-j}W_{m}\left(
n-j,k\right) ,
\end{align*}
which is the required expression (\ref{rec1}).
\end{proof}

\begin{theorem}
For $m\geq1$, the Dowling polynomials $D_{m}\left(  n,x\right)  $ satisfy the
recursion%
\begin{equation}
D_{m+1}\left(  n,x\right)  =\frac{1}{m^{n}}%
{\displaystyle\sum\limits_{j=0}^{n}}
\left(  -1\right)  ^{n-j}\binom{n}{j}\left(  m+1\right)  ^{j}D_{m}\left(
j,\frac{mx}{m+1}\right)  ,\label{recc1}
\end{equation}
with $D_{1}\left(  n,x\right)  =x^{-1}\phi_{n+1}\left(  x\right)  .$
\end{theorem}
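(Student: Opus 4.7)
The plan is to derive this directly from Theorem~1 (the recursion for $W_{m+1}(n,k)$) by multiplying through by $x^k$ and summing over $k$. Since $D_{m+1}(n,x)=\sum_{k=0}^{n}W_{m+1}(n,k)\,x^k$ by definition \eqref{b02}, the left-hand side of \eqref{recc1} is immediately the generating polynomial of the coefficients on the left side of \eqref{rec1}. So the only real task is to reorganize the resulting double sum so that the inner sum over $k$ again produces a Dowling polynomial, with a rescaled argument.

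Concretely, I would start from \eqref{rec1}, multiply by $x^k$, sum over $k$, and swap the order of summation to get
\begin{equation*}
D_{m+1}(n,x)=\sum_{j=0}^{n}(-1)^{n-j}\binom{n}{j}(m+1)^{j}\sum_{k=0}^{n}\frac{x^{k}}{(m+1)^{k}m^{n-k}}\,W_{m}(j,k).
\end{equation*}
The key algebraic step is to pull the factor $m^{-n}$ out of the inner sum and combine the remaining $x^{k}$, $(m+1)^{-k}$ and $m^{k}$ into a single $k$-th power; that is, write $\frac{x^{k}}{(m+1)^{k}m^{n-k}}=\frac{1}{m^{n}}\bigl(\tfrac{mx}{m+1}\bigr)^{k}$. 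After this identification, the inner sum becomes exactly $D_{m}\!\bigl(j,\tfrac{mx}{m+1}\bigr)$ (using that $W_{m}(j,k)=0$ for $k>j$, so extending the range up to $n$ is harmless), which yields \eqref{recc1}.

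For the initial condition, I would invoke the fact, already recorded in Theorem~1, that $W_{1}(n,k)=S(n+1,k+1)$. Then
\begin{equation*}
D_{1}(n,x)=\sum_{k=0}^{n}S(n+1,k+1)\,x^{k}=\frac{1}{x}\sum_{k=0}^{n}S(n+1,k+1)\,x^{k+1}=\frac{1}{x}\,\phi_{n+1}(x),
\end{equation*}
where the last equality uses $S(n+1,0)=0$ together with the definition of $\phi_{n+1}(x)$.

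There is no genuine obstacle here: the whole argument is a routine manipulation on top of Theorem~1. The only spot that requires a small amount of care is the bookkeeping of exponents of $m$ and $m+1$ in order to match the argument $\tfrac{mx}{m+1}$ exactly; mis-tracking a single power of $m$ would shift the normalization $1/m^{n}$ in front of the sum.
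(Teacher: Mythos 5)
Your proposal is correct and follows exactly the paper's own route: substitute the recursion \eqref{rec1} into the definition \eqref{b02}, swap the order of summation, and absorb the factor $\frac{x^{k}}{(m+1)^{k}m^{n-k}}=\frac{1}{m^{n}}\bigl(\frac{mx}{m+1}\bigr)^{k}$ so the inner sum becomes $D_{m}\bigl(j,\frac{mx}{m+1}\bigr)$. The paper's proof is just a terser version of the same computation, and your verification of the initial condition via $W_{1}(n,k)=S(n+1,k+1)$ is likewise the intended one.
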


\begin{proof}
By using (\ref{b02}) and (\ref{rec1}), we obtain%
\[
D_{m+1}\left(  n,x\right)  =\frac{1}{m^{n}}%
{\displaystyle\sum\limits_{j=0}^{n}}
\left(  -1\right)  ^{n-j}\binom{n}{j}\left(  m+1\right)  ^{j}%
{\displaystyle\sum\limits_{k=0}^{n}}
W_{m}\left(  j,k\right)  \left(  \frac{m}{m+1}x\right)  ^{k},
\]
we arrive at the desired result.
\end{proof}

\begin{theorem}\label{th3}
For $m\geq1$, we have%
\begin{equation}
\bigskip\mathcal{F}_{m+1}\left(  n,x\right)  =\frac{1}{m^{n}}%
{\displaystyle\sum\limits_{j=0}^{n}}
\left(  -1\right)  ^{n-j}\binom{n}{j}\left(  m+1\right)  ^{j}\mathcal{F}%
_{m}\left(  j,\frac{m}{m+1}x\right)  ,\label{sim0}
\end{equation}
with $\mathcal{F}_{1}\left(  n,x\right)  =\left(  1+\frac{1}{x}\right)
\omega_{n}\left(  x\right)  -\frac{\delta_{n,0}}{x}.$
\end{theorem}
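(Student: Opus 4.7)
The plan is to mirror the derivation of Theorem~2, replacing the weight $x^{k}$ by $k!\,x^{k}$ throughout. First, I would take the recursion (\ref{rec1}) from Theorem~1, multiply both sides by $k!\,x^{k}$, and sum over $k$ from $0$ to $n$; by the definition (\ref{b03}) the left-hand side collapses to $\mathcal{F}_{m+1}(n,x)$.

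On the right-hand side, since the summation index $j$ does not involve $k$, I would interchange the two summations. The key algebraic observation is that the $k$-dependent prefactor rearranges cleanly as
\begin{equation*}
\frac{k!\,x^{k}}{(m+1)^{k}\,m^{n-k}}=\frac{1}{m^{n}}\,k!\left(\frac{mx}{m+1}\right)^{k},
\end{equation*}
so that the inner sum over $k$ is precisely $\mathcal{F}_{m}\!\left(j,\tfrac{m}{m+1}x\right)$, yielding (\ref{sim0}) after pulling the common $1/m^{n}$ out front.

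For the initial condition, since $W_{1}(n,k)=S(n+1,k+1)$, I would apply the Stirling recurrence $S(n+1,k+1)=(k+1)S(n,k+1)+S(n,k)$ to split
\begin{equation*}
\mathcal{F}_{1}(n,x)=\sum_{k=0}^{n}k!\,S(n+1,k+1)\,x^{k}
\end{equation*}
into two pieces. The second piece is immediately $\omega_{n}(x)$. Shifting the index $k\mapsto k-1$ in the first piece and using $S(n,0)=0$ for $n\geq 1$ turns it into $x^{-1}\omega_{n}(x)$, giving $\mathcal{F}_{1}(n,x)=(1+x^{-1})\omega_{n}(x)$ for $n\geq 1$. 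The edge case $n=0$ yields $\mathcal{F}_{1}(0,x)=1$ directly, and the $-\delta_{n,0}/x$ correction precisely cancels the spurious $1/x$ term produced by $(1+x^{-1})\omega_{0}(x)$ when $n=0$.

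The main (mild) obstacle is this $n=0$ boundary bookkeeping: one must insert the $\delta_{n,0}/x$ correction to reconcile the clean formula valid for $n\geq 1$ with the convention $\omega_{0}(x)=1$. Everything else reduces to interchanging summations and one application of the Stirling recurrence, so the proof is essentially parallel to that of Theorem~2.
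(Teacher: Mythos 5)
Your proposal is correct and follows essentially the same route as the paper, which proves the identity by combining the definition (\ref{b03}) with the recursion (\ref{rec1}) exactly as you describe. Your verification of the initial condition $\mathcal{F}_{1}(n,x)=\left(1+\tfrac{1}{x}\right)\omega_{n}(x)-\tfrac{\delta_{n,0}}{x}$ via the Stirling recurrence is also sound (the paper simply states it without proof).
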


\begin{proof}
Combining (\ref{b03}) and (\ref{rec1}), we easily arrive at the desired result.
\end{proof}

\begin{theorem}
For $m\geq1$, the Dowling polynomials $D_{m}\left(  n,x\right)  $ satisfy
\begin{equation}
D_{m}\left(  n,x\right)  =\sum_{i=0}^{n}\binom{n}{i}m^{i}\phi_{i}\left(
\frac{x}{m}\right)  . \label{resulta1}%
\end{equation}

\end{theorem}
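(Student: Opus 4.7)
The identity to prove is
\[
D_{m}(n,x)=\sum_{i=0}^{n}\binom{n}{i}m^{i}\phi_{i}\!\left(\tfrac{x}{m}\right).
\]
The plan is to argue this in the cleanest way possible, namely by unfolding the definitions and swapping a sum. Starting from (\ref{b02}) and substituting the explicit formula (\ref{d01}) for $W_{m}(n,k)$, the double sum becomes
\[
D_{m}(n,x)=\sum_{k=0}^{n}x^{k}\sum_{i=k}^{n}\binom{n}{i}m^{i-k}S(i,k).
\]
Interchanging the two summations and pulling the factor $m^{i}$ outside, the inner sum collapses to $\phi_{i}(x/m)$ by the very definition of the Bell polynomials, which yields the claimed identity. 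The argument is purely formal and the only care needed is to verify that the range of summation is preserved under the swap (using $S(i,k)=0$ for $k>i$, which allows extending the inner sum to $0\le k\le i$).

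As an alternative, one may give a generating-function proof, which is slightly more conceptual: from (\ref{A02}) one obtains
\[
\sum_{n\geq 0}D_{m}(n,x)\,\frac{z^{n}}{n!}=e^{z}\exp\!\left(\tfrac{x}{m}(e^{mz}-1)\right),
\]
while the EGF of the Bell polynomial family gives
\[
\sum_{n\geq 0}\left(\sum_{i=0}^{n}\binom{n}{i}m^{i}\phi_{i}(x/m)\right)\frac{z^{n}}{n!}=e^{z}\cdot\exp\!\left(\tfrac{x}{m}(e^{mz}-1)\right),
\]
by a standard binomial-convolution argument combined with the substitution $z\mapsto mz$. Equality of the EGFs gives the identity.

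I would present the first (swap-of-summation) proof because it is shorter and uses only material already stated in the paper. I do not anticipate any real obstacle; the only subtle point is that the identity is phrased with $\phi_i(x/m)$ rather than $\phi_i(x)$, so one must keep track of the factor $m^{i-k}$ and split it as $m^{i}\cdot m^{-k}$ before attaching the $m^{-k}$ to $x^{k}$ to form $(x/m)^{k}$. That bookkeeping aside, the result is immediate.
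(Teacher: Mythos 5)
Your primary argument—substituting the explicit formula (\ref{d01}) into the definition (\ref{b02}), interchanging the sums, and recognizing the inner sum as $\phi_i(x/m)$—is exactly the paper's proof, and your bookkeeping of the factor $m^{i-k}=m^{i}\cdot m^{-k}$ is correct. The generating-function alternative is a valid bonus but not needed.
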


\begin{proof}
By using (\ref{b02}) and (\ref{d01}), we get%
\[
D_{m}\left(  n,x\right)  =%
{\displaystyle\sum\limits_{i=0}^{n}}
\binom{n}{i}m^{i}%
{\displaystyle\sum\limits_{k=0}^{n}}
S\left(  i,k\right)  \left(  \frac{x}{m}\right)  ^{k}.%
\]

\end{proof}

We note that the identity (\ref{resulta1}) can be viewed as a binomial
transform. Given a sequence $\alpha_{k},$ its binomial transform $\beta_{k}$
is the sequence defined by
\[
\beta_{n}=%
{\displaystyle\sum\limits_{k=0}^{n}}
\binom{n}{k}\alpha_{k},\text{ with inversion }\alpha_{n}=%
{\displaystyle\sum\limits_{k=0}^{n}}
\left(  -1\right)  ^{n-k}\binom{n}{k}\beta_{k}.
\]

From this observation we obtain
\begin{corollary}%
\begin{equation}
\phi_{n}\left(  \frac{x}{m}\right)  =\frac{1}{m^{n}}\sum_{i=0}^{n}\binom{n}%
{i}(-1)^{n-i}D_{m}\left(  i,x\right).  \label{resulta2}%
\end{equation}

\end{corollary}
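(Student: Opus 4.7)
The plan is to apply binomial inversion directly. The previous theorem established that
\[
D_{m}(n,x)=\sum_{i=0}^{n}\binom{n}{i}\bigl(m^{i}\phi_{i}(x/m)\bigr),
\]
which has exactly the shape $\beta_{n}=\sum_{i=0}^{n}\binom{n}{i}\alpha_{i}$ highlighted in the remark just preceding the corollary, with the identification $\alpha_{i}=m^{i}\phi_{i}(x/m)$ and $\beta_{i}=D_{m}(i,x)$. Here $x$ and $m$ are fixed parameters that play no role in the inversion; only the index $i$ varies.

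First I would invoke the binomial inversion formula stated in the remark, which gives
\[
\alpha_{n}=\sum_{i=0}^{n}(-1)^{n-i}\binom{n}{i}\beta_{i}.
\]
Substituting back, this reads
\[
m^{n}\phi_{n}(x/m)=\sum_{i=0}^{n}(-1)^{n-i}\binom{n}{i}D_{m}(i,x).
\]
Finally I would divide both sides by $m^{n}$ (legitimate since $m\geq 1$) to obtain the claimed identity \eqref{resulta2}.

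There is essentially no obstacle here: the corollary is an immediate consequence of the binomial transform pairing, and the only thing to verify is that the coefficients match up correctly, which is a direct reading of \eqref{resulta1}. The proof can be written in two or three lines, simply pointing to the inversion formula recorded in the remark.
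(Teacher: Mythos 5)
Your proof is correct and follows exactly the route the paper intends: the corollary is stated as an immediate consequence of the binomial inversion remark applied to \eqref{resulta1} with $\alpha_{i}=m^{i}\phi_{i}(x/m)$ and $\beta_{i}=D_{m}(i,x)$. The identification of the sequences and the final division by $m^{n}$ are the whole argument, and both check out.
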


By substituting $m=1$ in (\ref{resulta1}) and (\ref{resulta2}), we obtain the well-known results.

\begin{align}
x\phi_{n}\left(  x\right)   &  =\sum_{i=0}^{n}\binom{n}{i}(-1)^{n-i}\phi
_{i+1}\left(  x\right),\label{c01} \\
\phi_{n+1}\left(  x\right)   &  =x\sum_{i=0}^{n}\binom{n}{i}\phi_{i}\left(
x\right) \label{c02} .
\end{align}

As the result of Chen \cite{chen} for the binomial transform, we have%
\begin{equation}
\sum_{k=0}^{l}\binom{l}{k}\binom{n+k}{s}\alpha_{n+k-s}=\sum_{k=0}^{n}\binom
{n}{k}\binom{l+k}{s}\left(  -1\right)  ^{n-k}\beta_{l+k-s}.\label{sim1}%
\end{equation}
Substituting $\alpha_{k}:=m^{k}\phi_{k}\left(  \frac{x}{m}\right)  ,\beta
_{k}:=D_{m}\left(  \,k,x\right)  $ and $l=s=n$ into (\ref{sim1}), we get a
curious identity of Simons type (see \cite{boya}) which has the
interesting property that the binomial coefficient on both sides are the same%
\begin{equation}
\sum_{k=0}^{n}\binom{n}{k}\binom{n+k}{k}m^{k}\phi_{k}\left(  \frac{x}%
{m}\right)  =\sum_{k=0}^{n}\binom{n}{k}\binom{n+k}{k}\left(  -1\right)
^{n-k}D_{m}\left(  \,k,x\right)  ,\label{f01}
\end{equation}
and for $m=1,$ we have a curious identity for Bell polynomials%
\begin{equation}
\sum_{k=0}^{n}\binom{n}{k}\binom{n+k}{k}x\phi_{k}\left(  x\right)  =\sum
_{k=0}^{n}\binom{n}{k}\binom{n+k}{k}\left(  -1\right)  ^{n-k}\phi_{k+1}\left(
x\right)\label{f02}
\end{equation}

Now, setting $m=1,x:=2x$ in (\ref{recc1}) and setting $m=2,x:=2x$ in (\ref{resulta1}), we get another
curious identity for Bell polynomials%

\begin{corollary}
The following formula holds true
\[
\sum_{k=0}^{n}\binom{n}{k}2^{k}x\phi_{k}\left(  x\right)  =\sum_{k=0}%
^{n}\binom{n}{k}2^{k}\left(  -1\right)  ^{n-k}\phi_{k+1}\left(  x\right).
\]
\end{corollary}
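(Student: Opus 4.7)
The corollary is a direct consequence of computing $D_2(n,2x)$ in two different ways using the tools already established, so my plan is purely algebraic substitution.

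First, I would specialize the recursion (\ref{recc1}) by taking $m=1$ and then replacing $x$ with $2x$. This yields
\[
D_{2}(n,2x)=\sum_{j=0}^{n}(-1)^{n-j}\binom{n}{j}2^{j}\,D_{1}(j,x),
\]
since $\frac{mx}{m+1}$ with $m=1$ and argument $2x$ collapses to $x$. Invoking the initial condition $D_{1}(j,x)=x^{-1}\phi_{j+1}(x)$ from Theorem~2, this becomes
\[
D_{2}(n,2x)=\frac{1}{x}\sum_{j=0}^{n}(-1)^{n-j}\binom{n}{j}2^{j}\phi_{j+1}(x).
\]

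Second, I would specialize the binomial-transform identity (\ref{resulta1}) by taking $m=2$ and replacing $x$ with $2x$. Here $\frac{x}{m}$ becomes simply $x$, so
\[
D_{2}(n,2x)=\sum_{k=0}^{n}\binom{n}{k}2^{k}\phi_{k}(x).
\]

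Finally, I would equate the two expressions for $D_{2}(n,2x)$ and multiply through by $x$ to clear the denominator, which produces exactly the claimed identity. There is no real obstacle: the only thing to double-check is that the argument of $D_{1}$ in the first computation really reduces to $x$ (so that the initial condition $D_1(j,x)=x^{-1}\phi_{j+1}(x)$ applies directly), and that the argument of $\phi_k$ in the second computation reduces to $x$ (so both sides of the eventual identity are polynomials in the same variable). Both checks are immediate.
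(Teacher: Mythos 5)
Your proposal is correct and follows exactly the route the paper indicates: the paper derives this corollary precisely by setting $m=1$, $x:=2x$ in the recursion (\ref{recc1}) and $m=2$, $x:=2x$ in (\ref{resulta1}), then equating the two resulting expressions for $D_{2}(n,2x)$. You have simply written out the details that the paper leaves implicit, and all the substitutions check out.
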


Similarly, we obtain

\begin{theorem}
For $m\geq1,$ we have

\begin{align}
\mathcal{F}_{m}\left(  n,x\right)   &  =%
{\displaystyle\sum\limits_{i=0}^{n}}
\binom{n}{i}m^{i}\omega_{i}\left(  \frac{x}{m}\right),  \label{f1}\\
\omega_{n}\left(  \frac{x}{m}\right)   &  =\frac{1}{m^{n}}%
{\displaystyle\sum\limits_{i=0}^{n}}
\binom{n}{i}\left(  -1\right)  ^{n-i}\mathcal{F}_{m}\left(  i,x\right),
\label{f2}%
\end{align}

\begin{align}
x\omega_{n}\left(  x\right)    & =\left(  -1\right)  ^{n+1}+%
{\displaystyle\sum\limits_{i=0}^{n}}
\binom{n}{i}\left(  -1\right)  ^{n-i}\left(  x+1\right)  \omega_{i}\left(
x\right),  \label{f3}\\
\left(  1+x\right)  \omega_{n}\left(  x\right)    & =\delta_{n,0}+%
{\displaystyle\sum\limits_{i=0}^{n}}
\binom{n}{i}x\omega_{i}\left(  x\right),  \label{f4}%
\end{align}

\begin{align}
\sum_{k=0}^{n}\binom{n}{k}\binom{l+k}{s}\left(  -1\right)  ^{n-k}%
\mathcal{F}_{m}\left(  l+k-s,x\right)    & =\sum_{k=0}^{l}\binom{l}{k}%
\binom{n+k}{s}m^{n+k-s}\omega_{n+k-s}\left(  \frac{x}{m}\right),  \label{f5}\\
\sum_{k=0}^{n}\binom{n}{k}\binom{n+k}{k}x\omega_{k}\left(  x\right)    &
=1+\sum_{k=0}^{n}\binom{n}{k}\binom{n+k}{k}\left(  -1\right)  ^{n-k}%
\omega_{k}\left(  x\right),  \label{f6}%
\end{align}

\begin{equation}
\sum_{k=0}^{n}\binom{n}{k}2^{k}x\omega_{k}\left(  x\right)  =\left(  -1\right)
^{n}+\sum_{k=0}^{n}\binom{n}{k}2^{k}\left(  -1\right)  ^{n-k}\left(
1+x\right)  \omega_{k}\left(  x\right).  \label{f7}%
\end{equation}
\end{theorem}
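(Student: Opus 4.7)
The plan is to establish the seven identities in the order given, since each is either a direct twin of a Dowling-polynomial identity proved earlier in this section or follows from a preceding one by a standard operation (binomial inversion, specialization, or Chen's identity).

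First, identity (\ref{f1}) is proved exactly as (\ref{resulta1}) was: substitute the explicit formula (\ref{d01}) for $W_m(n,k)$ into the definition (\ref{b03}) of $\mathcal{F}_m(n,x)$, swap the summation order over $n\geq i\geq k\geq 0$, and factor out $m^i$ so that the inner sum over $k$ recognizes as $\omega_i(x/m)=\sum_k k!\,S(i,k)(x/m)^k$. Formula (\ref{f2}) is then the binomial inverse of (\ref{f1}), using the inversion pair recalled earlier in the section.

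To obtain (\ref{f3}), I set $m=1$ in (\ref{f2}) and substitute the closed form $\mathcal{F}_1(n,x)=(1+1/x)\omega_n(x)-\delta_{n,0}/x$ recorded in Theorem \ref{th3}. Only the $i=0$ term survives the Kronecker delta, producing a single $\pm 1/x$ contribution; multiplying through by $x$ then yields (\ref{f3}). Applying binomial inversion to (\ref{f3}) and using $\sum_i\binom{n}{i}(-1)^i=\delta_{n,0}$ gives (\ref{f4}).

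For the Simons-type identities, I invoke Chen's identity (\ref{sim1}) on the binomial-inverse pair (\ref{f1})-(\ref{f2}) with $\alpha_k=m^k\omega_k(x/m)$ and $\beta_k=\mathcal{F}_m(k,x)$; this is precisely (\ref{f5}). Specializing $l=s=n$ and $m=1$ in (\ref{f5}) and then eliminating $\mathcal{F}_1$ via Theorem \ref{th3} (exactly as in the derivation of (\ref{f3})) produces (\ref{f6}). Finally, (\ref{f7}) is the $\omega$-analogue of the corollary just before this theorem: equate the two expressions for $\mathcal{F}_2(n,2x)$ obtained from (\ref{sim0}) at $m=1,\,x\mapsto 2x$ and from (\ref{f1}) at $m=2,\,x\mapsto 2x$, then clear $\mathcal{F}_1$ by its closed form; the factor $2^k$ on both sides arises from $(m+1)^k$ at $m=1$.

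No deep obstacle is expected. The structural work is done once (\ref{f1}) is in hand, after which every remaining identity is a rearrangement of it. The only nontrivial bookkeeping is tracking the $\delta_{n,0}/x$ piece in the formula for $\mathcal{F}_1$: its sign survives each binomial inversion and each multiplication by $x$, and must be verified to reproduce the constants $(-1)^{n+1}$, $\delta_{n,0}$, $1$, and $(-1)^n$ appearing on the right-hand sides of (\ref{f3})-(\ref{f4}), (\ref{f6}), and (\ref{f7}).
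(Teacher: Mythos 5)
Your proposal follows the paper's proof essentially step for step: (\ref{f1}) from (\ref{b03}) and (\ref{d01}), (\ref{f2}) by binomial inversion, (\ref{f3})--(\ref{f4}) from the $m=1$ specialization together with the closed form of $\mathcal{F}_1$ in Theorem \ref{th3}, (\ref{f5}) from Chen's identity (\ref{sim1}) with the same choice of $\alpha_k$ and $\beta_k$, and (\ref{f6}), (\ref{f7}) by the same specializations the paper uses. The only cosmetic difference is that you obtain (\ref{f4}) by binomially inverting (\ref{f3}) rather than by setting $m=1$ directly in (\ref{f1}); the two routes are equivalent.
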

\begin{proof}
The identity (\ref{f1}) can be found in \cite{benoumhani2}, here we give a simple proof. Combining (\ref{b03}) and (\ref{d01}), we get%
\[
\sum_{k=0}^{n}k!W_{m}\left(  n,k\right)  x^{k}=%
{\displaystyle\sum\limits_{i=0}^{n}}
\binom{n}{i}m^{i}\sum_{k=0}^{n}k!S\left(  i,k\right)  \left(  \frac{x}%
{m}\right)  ^{k}.
\]
The relation (\ref{f2}) is the inverse binomial transform of identity (\ref{f1}).
As a special case, we get (\ref{f3}) and (\ref{f4}) by using Theorem \ref{th3} and
setting $m=1$ in (\ref{f1}) and (\ref{f2}).

By substituting $\alpha_{k}:=m^{k}\omega_{k}\left(  \frac{x}{m}\right)  $ and
$\beta_{k}:=\mathcal{F}_{m}\left(  \,k,x\right)  $ in (\ref{sim1}), we get
(\ref{f5}). The relation (\ref{f6}) is a special case, by setting $m=1$ and
$n=s=l$ in (\ref{f5}).

Finally, setting $m=1$, $x:=2x$ in (\ref{sim0}) and setting $m=2$, $x:=2x$ in (\ref{f1}), we get
(\ref{f7}). This completes the proof.

\end{proof}

The Hankel transform of a sequence $\alpha_{n}$ is the sequence of Hankel
determinants $H_{n}\left(  \alpha_{n}\right)  $, where $H_{n}\left(
\alpha_{n}\right)  =\det\left(  \alpha_{i+j}\right)  _{0\leq i,j\leq n}$. It is well known that the Hankel transform of a sequences $\alpha_{n}$ and
$\beta_{n}$ are equal (see \cite{layman}).

In 2000, Suter \cite{suter} proved that
$H_{n}\left(  D_{m}\left(  n,1\right)  \right)  =m^{\binom{n+1}{2}}%
{\displaystyle\prod\limits_{k=1}^{n}}
k!,$ we shall give the following generalization

\begin{corollary}%
\[
H_{n}\left(  D_{m}\left(  n,x\right)  \right)  =\left(  xm\right)
^{\binom{n+1}{2}}%
{\displaystyle\prod\limits_{k=1}^{n}}
k!.
\]

\end{corollary}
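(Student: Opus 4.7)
The plan is to combine the binomial-transform identity (\ref{resulta1}), Layman's invariance of the Hankel transform under binomial transforms (cited just above), and the classical Hankel evaluation for Bell polynomials. Reading (\ref{resulta1}) as
\[
D_m(n,x)\;=\;\sum_{i=0}^{n}\binom{n}{i}\,\bigl(m^i\phi_i(x/m)\bigr),
\]
I view $\{D_m(n,x)\}_n$ as the binomial transform of $\{m^n\phi_n(x/m)\}_n$. Layman's theorem then gives
\[
H_n\bigl(D_m(n,x)\bigr)\;=\;H_n\bigl(m^n\phi_n(x/m)\bigr).
\]
The $(i,j)$-entry of the right-hand Hankel matrix is $m^{i+j}\phi_{i+j}(x/m)$, so it factors as $D\,P\,D$ with $D=\operatorname{diag}(1,m,\ldots,m^n)$ and $P=\bigl(\phi_{i+j}(x/m)\bigr)_{0\le i,j\le n}$. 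Since $\det(D)=m^{\binom{n+1}{2}}$, taking determinants yields
\[
H_n\bigl(D_m(n,x)\bigr)\;=\;m^{\,n(n+1)}\,H_n\bigl(\phi_n(x/m)\bigr).
\]

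The remaining task is to evaluate the Hankel determinant of Bell polynomials at $y:=x/m$. The classical identity
\[
H_n\bigl(\phi_n(y)\bigr)\;=\;y^{\binom{n+1}{2}}\prod_{k=1}^n k!
\]
is the natural polynomial refinement of Aigner's formula for the Bell numbers. A quick way to establish it is to observe that $\phi_n(y)$ is the $n$-th moment of the Poisson distribution of parameter $y$, whose monic orthogonal polynomials are the Charlier polynomials with squared $L^2$-norm $h_k=k!\,y^k$; the standard moment-Hankel formula then gives $\prod_{k=0}^{n}h_k=y^{\binom{n+1}{2}}\prod_{k=1}^n k!$. Substituting $y=x/m$ into the previous display and using $n(n+1)=2\binom{n+1}{2}$ collapses the $m$-powers to a single factor $m^{\binom{n+1}{2}}$, producing $(mx)^{\binom{n+1}{2}}\prod_{k=1}^n k!$ as claimed.

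The only delicate step is the polynomial Hankel evaluation for $\phi_n(y)$: the quoted Suter theorem treats only $x=1$ (with the parameter $m$ inside $D_m$), so one must either cite the polynomial refinement or supply the short Charlier/moment argument sketched above. Once that classical fact is in place, the argument reduces to purely algebraic bookkeeping through the binomial transform and a diagonal scaling.
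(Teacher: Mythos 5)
Your proposal is correct and follows essentially the same route as the paper: both invoke Layman's invariance of the Hankel transform under the binomial transform to reduce to $H_n\bigl(m^n\phi_n(x/m)\bigr)$, and then apply the classical evaluation $H_n(\phi_n(x))=x^{\binom{n+1}{2}}\prod_{k=1}^{n}k!$ (which the paper simply cites from Radoux, while you supply a Charlier/Poisson-moment justification and make the diagonal-scaling bookkeeping explicit). The extra detail is welcome but does not change the argument's structure.
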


\begin{proof}
Using the fact that $H_{n}\left(  D_{m}\left(  n,x\right)  \right)
=H_{n}\left(  m^{n}\phi_{n}\left(  \frac{x}{m}\right)  \right)  $ and
$H_{n}\left(  \phi_{n}\left(  x\right)  \right)  =x^{\binom{n+1}{2}}%
{\displaystyle\prod\limits_{k=1}^{n}}
k!$ (cf. \cite{Radoux}).
\end{proof}

\section{The Eulerian-Dowling polynomials}
In this section, we define the Eulerian-Dowling polynomials and we derive
some elementary properties. According to (\ref{eqq2}), the following definition provides a natural
generalization of Eulerian polynomials.

\begin{definition}
The Eulerian-Dowling polynomials $\mathcal{A}_{m}\left(  n,x\right)  $ are
defined by%
\begin{align}
\mathcal{A}_{m}\left(  n,x\right)   &  =%
{\displaystyle\sum\limits_{i=0}^{n}}
i!W_{m}\left(  n,i\right)  \left(  x-1\right)  ^{n-i}\label{eul1}\\
&  =\left(  x-1\right)  ^{n}\mathcal{F}_{m}\left(  n,\frac{1}{x-1}\right)
\label{eul2}%
\end{align}

\end{definition}

From the above definition, we can rewrite $\mathcal{A}_{m}\left(  n,x\right)  $
as
\begin{align*}
\mathcal{A}_{m}\left(  n,x\right)   &  =%
{\displaystyle\sum\limits_{i=0}^{n}}
{\displaystyle\sum\limits_{k=0}^{n-i}}
\binom{n-i}{k}i!W_{m}\left(  n,i\right)  \left(  -1\right)  ^{n-i-k}x^{k}.\\
&  =%
{\displaystyle\sum\limits_{k=0}^{n}}
\left(
{\displaystyle\sum\limits_{i=0}^{n}}
\left(  -1\right)  ^{n-i-k}\binom{n-i}{k}i!W_{m}\left(  n,i\right)  \right)
x^{k},
\end{align*}
\bigskip

Now, we define the Eulerian-Dowling numbers $a_{m}\left(  n,k\right)  $
by
\begin{equation}
a_{m}\left(  n,k\right)  =%
{\displaystyle\sum\limits_{i=0}^{n}}
\left(  -1\right)  ^{n-i-k}i!\binom{n-i}{k}W_{m}\left(  n,i\right)
.\label{eul0}%
\end{equation}
For $m=1,$ we have%
\[
a_{1}\left(  n,k\right)  =\delta_{n,0}+%
\genfrac{\langle}{\rangle}{0pt}{}{n}{k-1}%
.
\]

The following elementary properties of the Eulerian-Dowling polynomials
are given

\begin{theorem}
The exponential generating function for $\mathcal{A}_{m}\left(  n,x\right)  $
is%
\[%
{\displaystyle\sum\limits_{n\geq0}}
\mathcal{A}_{m}\left(  n,x\right)  \frac{z^{n}}{n!}=\frac{m\left(  x-1\right)
e^{\left(  x-1\right)  z}}{m\left(  x-1\right)  +1-e^{m\left(  x-1\right)  z}%
}.
\]

\end{theorem}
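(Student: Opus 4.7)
The plan is to compute the EGF directly from the definition by swapping the order of summation and then recognizing a geometric series. Starting from equation (\ref{eul1}), I would write
\[
\sum_{n\geq 0}\mathcal{A}_{m}(n,x)\frac{z^{n}}{n!}
=\sum_{n\geq 0}\sum_{k=0}^{n} k!\,W_{m}(n,k)\,(x-1)^{n-k}\frac{z^{n}}{n!},
\]
and then factor $(x-1)^{n-k}z^{n}=(x-1)^{-k}\bigl((x-1)z\bigr)^{n}$ so that the inner sum over $n$ becomes the known EGF of $W_m(n,k)$ evaluated at the shifted argument $u=(x-1)z$. This is the crucial step because it decouples the $(x-1)^{n}$ factor from $k$ and produces a single summand indexed by $k$.

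Next I would invoke the generating function (\ref{A02}), which gives
\[
\sum_{n\geq k}W_{m}(n,k)\frac{u^{n}}{n!}=\frac{e^{u}(e^{mu}-1)^{k}}{m^{k}k!}.
\]
After substitution, the $k!$'s cancel and the expression reduces to
\[
e^{u}\sum_{k\geq 0}\left(\frac{e^{mu}-1}{m(x-1)}\right)^{k},
\]
which is a geometric series in $k$. Summing it formally gives $e^{u}\cdot m(x-1)/\bigl(m(x-1)+1-e^{mu}\bigr)$, and reinserting $u=(x-1)z$ yields exactly the claimed closed form.

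The computation itself is routine; the only point requiring care is the interchange of the double sum and the identification of the correct variable change $u=(x-1)z$ so that the inner EGF can be applied. No convergence issues arise since everything is a manipulation of formal power series in $z$. Since (\ref{eul2}) gives $\mathcal{A}_m(n,x)=(x-1)^{n}\mathcal{F}_m(n,1/(x-1))$, one can equivalently view this argument as extracting the EGF of $\mathcal{F}_m(n,y)$ at $y=1/(x-1)$ and then rescaling $z\mapsto (x-1)z$, which makes the factorization transparent.
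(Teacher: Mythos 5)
Your argument is correct and is essentially the same as the paper's proof: both swap the order of summation, pull out $(x-1)^{-k}$, apply the generating function (\ref{A02}) at the rescaled argument $(x-1)z$, and sum the resulting geometric series in $k$. No issues.
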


\begin{proof}
From (\ref{eul1}$)$ and (\ref{A02})%
\begin{align*}%
{\displaystyle\sum\limits_{n\geq0}}
\mathcal{A}_{m}\left(  n,x\right)  \frac{z^{n}}{n!}  &  =%
{\displaystyle\sum\limits_{i\geq0}}
i!\frac{1}{\left(  x-1\right)  ^{i}}%
{\displaystyle\sum\limits_{n\geq i}}
W_{m}\left(  n,i\right)  \frac{\left(  z\left(  x-1\right)  \right)  ^{n}}%
{n!}\\
&  =\exp\left(  z\left(  x-1\right)  \right)
{\displaystyle\sum\limits_{i\geq0}}
\left(  \frac{\exp\left(  zm\left(  x-1\right)  \right)  -1}{m\left(
x-1\right)  }\right)  ^{i},
\end{align*}
we arrive at the desired result.
\end{proof}

\bigskip

In \cite{benoumhani1,benoumhani2}, Benoumhani asked for the analogue of (\ref{relation}) for $\mathcal{F}_{m}\left(
n,x\right)  $. $\ $The answer to the previous question is given in the
following theorem

\begin{theorem}%
\begin{equation}
\mathcal{F}_{m}\left(  n,x\right)  =%
{\displaystyle\sum\limits_{k=0}^{n}}
a_{m}\left(  n,k\right)  \left(  1+x\right)  ^{k}x^{n-k}. \label{eul3}%
\end{equation}

\end{theorem}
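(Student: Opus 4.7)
The plan is to invert the relation (\ref{eul2}) which expresses $\mathcal{A}_{m}(n,x)$ in terms of $\mathcal{F}_{m}(n,\cdot)$, and then feed in the polynomial expansion of $\mathcal{A}_{m}(n,x)$ whose coefficients are precisely the Eulerian-Dowling numbers $a_{m}(n,k)$.

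First I would record the two representations of $\mathcal{A}_{m}(n,x)$ that are already available in the text. The definition (\ref{eul0}) together with the expansion carried out just after it gives
\[
\mathcal{A}_{m}(n,x) = \sum_{k=0}^{n} a_{m}(n,k)\,x^{k},
\]
while (\ref{eul2}) gives the generating-function-style identity
\[
\mathcal{A}_{m}(n,x) = (x-1)^{n}\,\mathcal{F}_{m}\!\left(n,\tfrac{1}{x-1}\right).
\]

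Next I would invert the change of variable. Setting $y=1/(x-1)$, so that $x-1 = 1/y$ and $x = (1+y)/y$, the second identity becomes
\[
\mathcal{F}_{m}(n,y) = y^{n}\,\mathcal{A}_{m}\!\left(n,\tfrac{1+y}{y}\right).
\]
Substituting the first (polynomial) representation of $\mathcal{A}_{m}$ evaluated at $(1+y)/y$ yields
\[
\mathcal{F}_{m}(n,y) = y^{n} \sum_{k=0}^{n} a_{m}(n,k) \left(\frac{1+y}{y}\right)^{k} = \sum_{k=0}^{n} a_{m}(n,k)(1+y)^{k}\,y^{n-k},
\]
which is exactly (\ref{eul3}) after renaming $y$ back to $x$.

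There is essentially no obstacle: the content of the theorem is already packaged in (\ref{eul1})--(\ref{eul2}) and in the expansion defining $a_{m}(n,k)$, so only the substitution $x\mapsto (1+x)/x$ is needed. The one thing to be mindful of is that (\ref{eul2}) and the substitution are identities between polynomials in $x$ (after clearing the apparent pole at $x=0$ coming from the $y^{n}$ factor), so one should note that both sides are polynomials of degree $n$ and the identity first derived for formal/rational $x$ holds as a polynomial identity.
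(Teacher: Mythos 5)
Your proposal is correct and is essentially the same as the paper's proof: the paper likewise inverts (\ref{eul2}) to write $\mathcal{F}_{m}(n,x)=x^{n}\mathcal{A}_{m}\left(n,\frac{1+x}{x}\right)$ and then substitutes the expansion $\mathcal{A}_{m}(n,x)=\sum_{k}a_{m}(n,k)x^{k}$. Your added remark about the rational substitution yielding a genuine polynomial identity is a reasonable extra precaution but not part of the paper's argument.
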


\begin{proof}
From (\ref{eul2}), we can write $\mathcal{F}_{m}\left(  n,x\right)  $ as%
\begin{align*}
\mathcal{F}_{m}\left(  n,x\right)   &  =x^{n}\mathcal{A}_{m}\left(
n,\frac{1+x}{x}\right) \\
&  =x^{n}%
{\displaystyle\sum\limits_{k=0}^{n}}
a_{m}\left(  n,k\right)  \left(  \frac{1+x}{x}\right)  ^{k},
\end{align*}

which completes the proof.
\end{proof}

As a special case, we have the well known result

\begin{corollary}%
\[
\omega_{n}\left(  1\right)  =%
{\displaystyle\sum\limits_{k=0}^{n}}
\genfrac{\langle}{\rangle}{0pt}{}{n}{k}%
2^{k}.
\]

\end{corollary}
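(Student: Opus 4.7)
The plan is to read the corollary as the $m=1,\,x=1$ specialization of equation~(\ref{eul3}). First I would set $m=1$ in (\ref{eul3}) and evaluate at $x=1$ to obtain
$$\mathcal{F}_1(n,1)=\sum_{k=0}^{n}a_1(n,k)\,2^{k}.$$
The paper has already supplied the two ingredients needed to identify both sides with familiar quantities: the explicit value $a_1(n,k)=\delta_{n,0}+\genfrac{\langle}{\rangle}{0pt}{}{n}{k-1}$, recorded right after (\ref{eul0}); and, from Theorem~\ref{th3}, the closed form $\mathcal{F}_1(n,x)=(1+1/x)\omega_n(x)-\delta_{n,0}/x$, which at $x=1$ yields $\mathcal{F}_1(n,1)=2\omega_n(1)-\delta_{n,0}$.

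Next I would substitute these into the displayed identity and shift the summation index $j=k-1$ on the right. The $\delta_{n,0}$ contributions match on the two sides (for $n=0$ the identity collapses to $1=1$), so for $n\ge 1$ what remains is
$$2\omega_n(1)=\sum_{k=0}^{n}\genfrac{\langle}{\rangle}{0pt}{}{n}{k-1}2^{k}=2\sum_{j=0}^{n-1}\genfrac{\langle}{\rangle}{0pt}{}{n}{j}2^{j}.$$
Using $\genfrac{\langle}{\rangle}{0pt}{}{n}{n}=0$ for $n\ge 1$, I can extend the upper limit to $n$ without changing the value, and dividing through by $2$ delivers the claimed identity. There is essentially no obstacle: the argument is bookkeeping around the $x=1$ specialization of (\ref{eul3}).

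As a sanity check, the same identity drops out by setting $x=1$ directly in the already-established relation (\ref{relation}), $\omega_n(x)=\sum_{k=0}^{n}\genfrac{\langle}{\rangle}{0pt}{}{n}{k}(1+x)^{k}x^{n-k}$, so the corollary is consistent with the earlier connection between geometric polynomials and Eulerian numbers.
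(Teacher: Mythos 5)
Your proposal is correct and follows essentially the same route as the paper: specialize (\ref{eul3}) at $m=1$, $x=1$, identify $\mathcal{F}_1(n,1)=2\omega_n(1)-\delta_{n,0}$ via Theorem \ref{th3} and $a_1(n,k)=\delta_{n,0}+\genfrac{\langle}{\rangle}{0pt}{}{n}{k-1}$, shift the index, and absorb the top term using $\genfrac{\langle}{\rangle}{0pt}{}{n}{n}=\delta_{n,0}$. The bookkeeping with the Kronecker deltas is handled correctly, and your closing remark that the identity also falls out of (\ref{relation}) at $x=1$ is a valid (indeed shorter) alternative, though the paper does not use it.
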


\begin{proof}
By setting $m=1,$ $x=1$ in (\ref{eul3}) and using Theorem \ref{th3}, we get%
\[
2\omega_{n}\left(  1\right)  -\delta_{n,0}=%
{\displaystyle\sum\limits_{k=0}^{n}}
\left(  \delta_{n,0}+%
\genfrac{\langle}{\rangle}{0pt}{}{n}{k-1}%
\right)  2^{k},
\]
from which it follows that
\[
\omega_{n}\left(  1\right)  =\delta_{n,0}+%
{\displaystyle\sum\limits_{k=0}^{n-1}}
\genfrac{\langle}{\rangle}{0pt}{}{n}{k}%
2^{k},
\]
since $%
\genfrac{\langle}{\rangle}{0pt}{}{n}{n}%
=\delta_{n,0},$ we obtain the result.
\end{proof}

\section{Congruences for Dowling numbers}
By using the Gessel method \cite{Gessel}, we shall give some congruences for the Dowling numbers. We consider the polynomials $R_{n,k}^{\left(  m\right)  }\left(  t\right)  $
for fixed $m$, defined by the exponential generating function%
\begin{equation}%
{\displaystyle\sum\limits_{n\geq k}}
R_{n,k}^{\left(  m\right)  }\left(  t\right)  \frac{z^{n}}{n!}=e^{-tz}\left(
1+mz\right)  ^{-\frac{1}{m}}\frac{\left(  \ln(1+mz)\right)  ^{k}}{m^{k}%
k!}.\label{rel1}%
\end{equation}

\begin{theorem}
The following explicit representation formula holds true
\begin{equation}
R_{n,k}^{\left(  m\right)  }\left(  t\right)  =%
{\displaystyle\sum\limits_{j=0}^{n}}
\left(  -1\right)  ^{j}\binom{n}{j}w_{m}\left(  n-j,k\right)  t^{j}.\label{exp1}
\end{equation}
Here $w_{m}\left(  n,k\right)  $ are the Whitney numbers of the first kind.
\end{theorem}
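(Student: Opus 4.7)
The plan is to read off the claimed formula directly from the generating function (\ref{rel1}) by factoring it and comparing with the known generating function (\ref{A01}) for the Whitney numbers of the first kind.

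First, I would observe that the right-hand side of (\ref{rel1}) splits as a product of two exponential generating functions:
\[
e^{-tz} \,\cdot\, \left[(1+mz)^{-\tfrac{1}{m}}\,\frac{(\ln(1+mz))^{k}}{m^{k}k!}\right],
\]
where the bracketed factor is, by (\ref{A01}), precisely $\sum_{n\geq k} w_m(n,k)\,\tfrac{z^n}{n!}$. The other factor $e^{-tz}$ expands as $\sum_{j\geq 0}(-1)^{j}t^{j}\,\tfrac{z^{j}}{j!}$.

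Next, I would use the standard Cauchy product rule for exponential generating functions (binomial convolution): if $f(z)=\sum a_n z^n/n!$ and $g(z)=\sum b_n z^n/n!$, then $f(z)g(z)=\sum_n \bigl(\sum_{j=0}^n\binom{n}{j}a_j b_{n-j}\bigr)z^n/n!$. Applying this with $a_j=(-1)^j t^j$ and $b_{n-j}=w_m(n-j,k)$ (interpreting $w_m(n-j,k)=0$ when $n-j<k$, which is consistent with the convention in Section~2) yields
\[
\sum_{n\geq k}\!\left(\sum_{j=0}^{n}(-1)^{j}\binom{n}{j}w_{m}(n-j,k)\,t^{j}\right)\!\frac{z^{n}}{n!}.
\]

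Finally, equating coefficients of $z^n/n!$ on both sides of (\ref{rel1}) gives the claimed identity (\ref{exp1}). There is no real obstacle here: the argument is essentially a one-line manipulation once the factorization is noticed, and the only thing to be mildly careful about is the index range (the convention $w_m(n-j,k)=0$ for $n-j<k$ ensures that the sum on the right is consistent with the EGF starting at $n=k$).
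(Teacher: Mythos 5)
Your argument is correct and coincides with the paper's own proof: both factor the generating function (\ref{rel1}) as $e^{-tz}$ times the right-hand side of (\ref{A01}) and then read off the coefficient of $z^{n}/n!$ via the binomial (Cauchy) convolution of exponential generating functions. Your remark about the convention $w_{m}(n-j,k)=0$ for $n-j<k$ is a small but welcome point of care that the paper leaves implicit.
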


\begin{proof}
From the generating function (\ref{A01}) we have%
\begin{align*}%
{\displaystyle\sum\limits_{n\geq k}}
R_{n,k}^{\left(  m\right)  }\left(  t\right)  \frac{z^{n}}{n!}  &  =%
{\displaystyle\sum\limits_{n\geq0}}
\left(  -1\right)  ^{n}t^{n}\frac{z^{n}}{n!}%
{\displaystyle\sum\limits_{n\geq k}}
w_{m}\left(  n,k\right)  \frac{z^{n}}{n!}\\
&  =%
{\displaystyle\sum\limits_{n\geq0}}
\frac{z^{n}}{n!}%
{\displaystyle\sum\limits_{j=0}^{n}}
\left(  -1\right)  ^{j}\binom{n}{j}w\left(  n-j,k\right)  t^{j}.
\end{align*}
Equating the coefficients of $\frac{z^{n}}{n!}$ we get the result.
\end{proof}

\begin{theorem}
The double generating function for $R_{n,k}^{\left(  m\right)  }\left(
t\right)  $ is%
\begin{equation}%
{\displaystyle\sum\limits_{n\geq0,k\geq0}}
R_{n,k}^{\left(  m\right)  }\frac{z^{n}}{n!}u^{k}=e^{-tz}\left(  1+mz\right)
^{\frac{u-1}{m}}.\label{rel2}%
\end{equation}

\end{theorem}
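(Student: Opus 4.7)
The plan is to derive (\ref{rel2}) directly from the single-index generating function (\ref{rel1}) by multiplying through by $u^k$ and summing over $k \geq 0$. Specifically, I would write
\[
\sum_{n\geq 0,\, k\geq 0} R_{n,k}^{(m)}(t)\,\frac{z^n}{n!}\,u^k \;=\; \sum_{k\geq 0} u^k \sum_{n\geq k} R_{n,k}^{(m)}(t)\,\frac{z^n}{n!},
\]
where the reindexing uses the fact that $R_{n,k}^{(m)}(t) = 0$ for $n < k$ (evident from (\ref{exp1}), since $w_m(n-j,k)$ vanishes for $n-j < k$). This exchange of summations requires no convergence gymnastics because we are working at the level of formal power series in $z$ and $u$.

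Next, I substitute the identity (\ref{rel1}) for the inner sum to obtain
\[
\sum_{k\geq 0} u^k \cdot e^{-tz}(1+mz)^{-1/m}\,\frac{(\ln(1+mz))^k}{m^k k!}
\;=\; e^{-tz}(1+mz)^{-1/m} \sum_{k\geq 0} \frac{1}{k!}\left(\frac{u\ln(1+mz)}{m}\right)^{\!k}.
\]
The inner series is the exponential series, so it collapses to $\exp\!\left(\tfrac{u}{m}\ln(1+mz)\right) = (1+mz)^{u/m}$. Combining this with the $(1+mz)^{-1/m}$ factor yields $(1+mz)^{(u-1)/m}$, and the $e^{-tz}$ factor is untouched, giving exactly the right-hand side of (\ref{rel2}).

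There is no serious obstacle here: the proof is essentially a one-line manipulation. The only point worth making explicit is the recognition step in which the sum over $k$ is identified as the exponential series in the variable $\tfrac{u}{m}\ln(1+mz)$, which then re-exponentiates to a power of $1+mz$. For a clean writeup I would simply chain the displayed equalities above, perhaps noting in a single sentence that the support condition $R_{n,k}^{(m)}(t) = 0$ for $n<k$ (inherited from $w_m(n,k)=0$ for $k>n$) justifies writing the double sum in either order.
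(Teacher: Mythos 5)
Your proposal is correct and follows exactly the paper's own argument: multiply (\ref{rel1}) by $u^k$, sum over $k$, and recognize the resulting exponential series as $(1+mz)^{u/m}$, which combines with the $(1+mz)^{-1/m}$ factor to give $(1+mz)^{(u-1)/m}$. Your writeup is in fact slightly cleaner, since you make explicit the support condition $R_{n,k}^{(m)}(t)=0$ for $n<k$ that justifies the interchange of summations.
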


\begin{proof}
From (\ref{rel1})%
\begin{align*}%
{\displaystyle\sum\limits_{n\geq k}}
R_{n,k}^{\left(  m\right)  }\left(  t\right)  \frac{z^{n}}{n!}%
{\displaystyle\sum\limits_{k}}
u^{k}  & =\left(  1+mz\right)  ^{-\frac{1}{m}}e^{-tz}%
{\displaystyle\sum\limits_{k}}
\left(  \frac{\ln(1+mz)u}{m}\right)  ^{k}\frac{1}{k!}\\
& =\left(  1+mz\right)  ^{-\frac{1}{m}}e^{-z}\exp(\ln(1+mz)^{\frac{u}{m}}).
\end{align*}

\end{proof}

\begin{theorem}
The $R_{n,k}^{\left(  m\right)  }\left(  t\right)  $ satisfy the following
recurrence relation%
\begin{equation}
R_{n+1,k}^{\left(  m\right)  }\left(  t\right)  =R_{n,k-1}^{\left(  m\right)
}\left(  t\right)  -(\left(  1+t\right)  +mn)R_{n,k}^{\left(  m\right)
}\left(  t\right)  -mntR_{n-1,k}^{\left(  m\right)  },\label{rel3}%
\end{equation}
with initial conditions $R_{0,0}^{\left(  m\right)  }\left(  t\right)  =1$ and
$R_{n,k}^{\left(  m\right)  }\left(  t\right)  =0$ if $k>n$ or $k<0.$
\end{theorem}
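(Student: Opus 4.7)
The natural approach is to extract the recurrence from the generating function (\ref{rel1}) by first deriving a first-order linear differential--difference equation in $z$. Set
\[
G_k(z) := \sum_{n\geq k} R_{n,k}^{(m)}(t)\,\frac{z^n}{n!} = e^{-tz}(1+mz)^{-1/m}\,\frac{(\ln(1+mz))^k}{m^k\,k!}.
\]
Differentiating this product with respect to $z$, the factor $e^{-tz}$ contributes $-t$, the factor $(1+mz)^{-1/m}$ contributes $-1/(1+mz)$, and the factor $(\ln(1+mz))^k/(m^k k!)$ contributes a term that telescopes into $G_{k-1}(z)/(1+mz)$. After multiplying through by $1+mz$, this yields the clean identity
\[
(1+mz)\,G_k'(z) + \bigl(1 + t + m t z\bigr)\,G_k(z) = G_{k-1}(z).
\]

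I would then read off the recurrence by equating coefficients of $z^n/n!$ on each side. The elementary identities $[z^n/n!]\,G_k'(z) = R_{n+1,k}^{(m)}(t)$, $[z^n/n!]\,(mz)G_k'(z) = mn\,R_{n,k}^{(m)}(t)$, and $[z^n/n!]\,(mtz)G_k(z) = mnt\,R_{n-1,k}^{(m)}(t)$ convert the left-hand side into
\[
R_{n+1,k}^{(m)}(t) + \bigl((1+t) + mn\bigr)R_{n,k}^{(m)}(t) + mnt\,R_{n-1,k}^{(m)}(t),
\]
while the right-hand side is $R_{n,k-1}^{(m)}(t)$. Rearranging gives the asserted three-term recurrence (\ref{rel3}).

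The initial condition $R_{0,0}^{(m)}(t)=1$ is immediate either from (\ref{rel1}) or from the explicit formula (\ref{exp1}) (since $w_m(0,0)=1$), and the boundary vanishings $R_{n,k}^{(m)}(t)=0$ for $k<0$ or $k>n$ are built into the indexing of (\ref{rel1}). There is no substantive obstacle here; the only point requiring care is the bookkeeping when differentiating the triple product and when multiplying by $1+mz$ to clear the denominator before extracting coefficients.
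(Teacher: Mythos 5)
Your proof is correct and follows essentially the same route as the paper: both derive the first-order relation $(1+mz)\,\partial_z R = \bigl(u-1-t(1+mz)\bigr)R$ for the generating function and read off the recurrence by comparing coefficients, the only cosmetic difference being that the paper works with the double generating function $R(u,z)=e^{-tz}(1+mz)^{(u-1)/m}$ (so the shift $k\to k-1$ appears as multiplication by $u$) while you fix $k$ and let the derivative of $(\ln(1+mz))^k$ produce $G_{k-1}$ directly. The computations are identical in substance and your coefficient extraction is accurate.
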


\begin{proof}
Let \ $R\left(  u,z\right)  $ $\ $\ be the double generating function
(\ref{rel2}). Then by differentiation with respect to $z$ we obtain%
\[
\left(  1+mz\right)  \frac{d}{dz}R\left(  u,z\right)  =\left(  u-1-t\left(
1+mz\right)  \right)  R\left(  u,z\right),
\]
or equivalently%
\[%
{\displaystyle\sum\limits_{n,k}}
\left(  R_{n+1,k}^{\left(  m\right)  }+mnR_{n,k}^{\left(  m\right)  }\right)
\frac{z^{n}}{n!}u^{k}=%
{\displaystyle\sum\limits_{n,k}}
\left(  R_{n,k-1}^{\left(  m\right)  }-\left(  1+t\right)  R_{n,k}^{\left(
m\right)  }-tmnR_{n-1,k}^{\left(  m\right)  }\right)  \frac{z^{n}}{n!}u^{k}.
\]

Comparing the coefficients of $\frac{z^{n}}{n!}u^{k}$ on both sides of the above equation, we arrive at the desired
result.
\end{proof}

\bigskip

Taking $t=1$ in (\ref{rel3}) and a little computation gives the following table of values : Table \ref{tab1}
\begin{table}
\caption{$R_{n,k}^{\left(  m\right)  }\left(1\right)  $\label{tab1}}
\medskip
\begin{center}
\begin{tabular}
[c]{l|cccccc}%
$n\left\backslash k\right.  $ & $0$ & $1$ & $2$ & $3$ & $4$ \\\hline
$0$ & $1$ &  &  &  &  & \\
$1$ & $-2$ & $1$ &  &  &  & \\
$2$ & $m+4$ & $-m-4$ & $1$ &  &  & \\
$3$ & $-2m^{2}-6m-8$ & $2m^{2}+9m+12$ & $-3m-6$ & $1$ &  & \\
$4$ & $6m^{3}+19m^{2}+24m+16$ & $-6m^{3}-30m^{2}-48m-32$ & $11m^{2}+30m+24$ &
$-6m-8$ & $1$ & \\
\end{tabular}
\end{center}
\end{table}

\begin{theorem}
\begin{align}%
{\displaystyle\sum\limits_{k=0}^{n}}
R_{n,k}^{\left(  m\right)  }\left(  t\right)  D_{m}\left(  i+k,t\right)    & =t^{n}n!%
{\displaystyle\sum\limits_{j=0}^{i}}
m^{i-j}\binom{i}{j}S\left(  i-j,n\right)  D_{m}\left(  j,t\right)
\label{cong4}\\
& =\left\{
\begin{tabular}
[c]{lr}%
$n!m^{n}t^{n},$ & $\text{\ }i=n$\\
$0,$ & \multicolumn{1}{l}{$\text{ }0\leq i<n$}%
\end{tabular}
\right.  .\nonumber
\end{align}
where $R_{n,k}^{\left(  m\right)  }\left(  t\right)$ is defined in (\ref{exp1}).
\end{theorem}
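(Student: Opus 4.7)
The plan is to establish both equalities at once by computing a single bivariate exponential generating function. Denote the left-hand side by $S_{n,i}$ and form
\[
T(z,u)=\sum_{n,i\ge 0}S_{n,i}\,\frac{z^{n}}{n!}\frac{u^{i}}{i!}.
\]
Swapping the order of summation, $T(z,u)$ factors into the generating function (\ref{rel1}) for $R_{n,k}^{(m)}(t)$ in the variable $z$ combined with
\[
\sum_{i,k\ge 0}D_{m}(i+k,t)\,\frac{u^{i}}{i!}\frac{v^{k}}{k!}=F(u+v),
\]
where $F(w)=\sum_{j\ge 0}D_{m}(j,t)\,w^{j}/j!=e^{w}\exp\!\left(t(e^{mw}-1)/m\right)$ is the EGF of the Dowling polynomials, immediate from (\ref{b02}) and (\ref{A02}). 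Substituting $v=\ln(1+mz)/m$ (so that $e^{mv}=1+mz$ and $e^{v}=(1+mz)^{1/m}$) I should obtain
\[
T(z,u)=e^{-tz}(1+mz)^{-1/m}\cdot e^{u}(1+mz)^{1/m}\exp\!\left(\frac{t(e^{mu}(1+mz)-1)}{m}\right).
\]

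The key algebraic step is then the exponent collapse
\[
-tz+\frac{t(e^{mu}(1+mz)-1)}{m}=\frac{t(1+mz)(e^{mu}-1)}{m},
\]
which reduces $T(z,u)$ to the clean product $F(u)\exp\!\left(tz(e^{mu}-1)\right)$. Extracting the coefficient of $z^{n}/n!$ gives
\[
\sum_{k}R_{n,k}^{(m)}(t)\,D_{m}(i+k,t)=i!\,[u^{i}]\,t^{n}(e^{mu}-1)^{n}F(u);
\]
expanding $(e^{mu}-1)^{n}=n!\sum_{k}m^{k}S(k,n)u^{k}/k!$ and performing the binomial convolution with $F(u)=\sum_{j}D_{m}(j,t)u^{j}/j!$ reproduces exactly the middle expression $t^{n}n!\sum_{j=0}^{i}m^{i-j}\binom{i}{j}S(i-j,n)D_{m}(j,t)$ of the theorem.

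The closed-form second equality then follows from the elementary vanishing $S(k,n)=0$ for $k<n$: when $0\le i<n$ every term in the sum vanishes because $i-j\le i<n$, while for $i=n$ only the index $j=0$ survives (since $S(n-j,n)=\delta_{j,0}$), contributing $m^{n}S(n,n)D_{m}(0,t)=m^{n}$, which multiplied by $t^{n}n!$ yields $n!\,m^{n}t^{n}$. The principal obstacle I expect is verifying the exponent collapse cleanly; once that algebraic identity is in hand, everything else is routine coefficient extraction using the standard Stirling generating function.
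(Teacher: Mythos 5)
Your proposal is correct and is essentially the paper's own argument (the Gessel method) in different notation: the paper encodes $\sum_{i,k}D_{m}(i+k,t)\,x^{i}y^{k}/(i!\,k!)=f(x+y)$ via Taylor's theorem and the functional equation for $f$, makes the same substitution $y=\ln(1+mz)/m$, multiplies by $e^{-tz}(1+mz)^{-1/m}$, and performs the identical exponent collapse to reach $f(x)\exp\bigl(tz(e^{mx}-1)\bigr)$. The coefficient extraction and the vanishing of $S(i-j,n)$ for $i<n$ likewise match the paper's conclusion.
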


\begin{proof}
Let $f\left(  x\right)  $ be the generating function for the Dowling
polynomials, so that%
\[
f(x)=\exp(x+\frac{t}{m}\left(  e^{mx}-1\right)  ).
\]
Then $f\left(  x\right)  $ satisfies the functional equation%
\begin{equation}
f(x+y)=f\left(  x\right)  \exp(y+\frac{t}{m}\left(  e^{my}-1)e^{mx}\right)
).\label{cong1}%
\end{equation}
Using Taylor's theorem, we have
\[
f(x+y)=\sum_{k\geq0}f^{\left(  k\right)  }\left(  x\right)  \frac{y^{k}}{k!},
\]
where $f^{\left(  k\right)  }\left(  x\right)  =\frac{d^{k}}{dx^{k}}f\left(
x\right)  ,$ it follows that
\begin{equation}
f^{\left(  k\right)  }\left(  x\right)  =%
{\displaystyle\sum\limits_{i\geq0}}
D_{m}\left(  i+k,t\right)  \frac{x^{i}}{i!}.\label{cong2}%
\end{equation}
Now, set $z=\frac{\exp(my)-1}{m}$ in (\ref{cong1}), we get%
\[
\sum_{k\geq0}f^{\left(  k\right)  }\left(  x\right)  \frac{\left[
\ln(1+mz)\right]  ^{k}}{m^{k}k!}=f\left(  x\right)  \left(  1+mz\right)
^{\frac{1}{m}}\exp(tze^{mx}).
\]
Multiplying both sides by $e^{-tz}\left(  1+mz\right)  ^{-\frac{1}{m}}$ we
obtain
\[
f\left(  x\right)  \exp(tz\left(  e^{mx}-1\right)  )=\sum_{k\geq0}f^{\left(
k\right)  }\left(  x\right)  e^{-tz}\left(  1+mz\right)  ^{-\frac{1}{m}}%
\frac{\left[  \ln(1+mz)\right]  ^{k}}{m^{k}k!},
\]
and
\[
f\left(  x\right)  \exp(tz\left(  e^{mx}-1\right)  )=%
{\displaystyle\sum\limits_{n\geq0}}
f\left(  x\right)  \frac{t^{n}z^{n}}{n!}\left(  e^{mx}-1\right)  ^{n}.
\]
Since
\[
e^{-tz}\left(  1+mz\right)  ^{-\frac{1}{m}}\frac{\left[  \ln(1+mz)\right]
^{k}}{m^{k}k!}=%
{\displaystyle\sum\limits_{n\geq k}}
R_{n,k}^{\left(  m\right)  }\left(  t\right)  \frac{z^{n}}{n!},
\]
and
\begin{equation}
\frac{1}{n!}\left(  e^{mx}-1\right)  ^{n}=%
{\displaystyle\sum\limits_{i\geq n}}
m^{i}S\left(  i,n\right)  \frac{x^{i}}{i!}.\label{eng3}%
\end{equation}
It follows from (\ref{cong2}) that
\begin{align*}
\sum_{k\geq0}f^{\left(  k\right)  }\left(  x\right)  \left(  1+mz\right)
^{-\frac{1}{m}}e^{-tz}\frac{\left[  \ln(1+mz)\right]  ^{k}}{m^{k}k!}  &
=\sum_{k\geq0}f^{\left(  k\right)  }\left(  x\right)
{\displaystyle\sum\limits_{n\geq0}}
R_{n,k}^{\left(  m\right)  }\left(  t\right)  \frac{z^{n}}{n!}\\
& =\sum_{n\geq0}\sum_{i\geq0}\frac{x^{i}}{i!}\frac{z^{n}}{n!}%
{\displaystyle\sum\limits_{k=0}^{n}}
R_{n,k}^{\left(  m\right)  }\left(  t\right)  D_{m}\left(  i+k,t\right)  ,
\end{align*}

and by (\ref{eng3}), we get
\begin{align*}%
{\displaystyle\sum\limits_{n\geq0}}
t^{n}z^{n}f\left(  x\right)  \frac{1}{n!}\left(  e^{mx}-1\right)  ^{n}  & =%
{\displaystyle\sum\limits_{n\geq0}}
t^{n}z^{n}\sum_{n\geq0}D_{m}\left(  n,t\right)  \frac{x^{n}}{n!}%
{\displaystyle\sum\limits_{i\geq n}}
m^{i}S\left(  i,n\right)  \frac{x^{i}}{i!}\\
& =%
{\displaystyle\sum\limits_{n\geq0}}
{\displaystyle\sum\limits_{i\geq0}}
\frac{x^{i}}{i!}\frac{z^{n}}{n!}t^{n}n!%
{\displaystyle\sum\limits_{j=0}^{i}}
m^{i-j}\binom{i}{j}S\left(  i-j,n\right)  D_{m}\left(  j,t\right).
\end{align*}

Equating coefficients of $\frac{x^{i}}{i!}\frac{z^{n}}{n!}$, we get the results.
\end{proof}

It is clear from $\left(  \ref{cong4}\right)  $ that the right-hand side is
divisible by $n!$.

\begin{corollary}
Let $n,i$ be non-negative integers with $i\leq n$, we have%
\begin{equation}%
{\displaystyle\sum\limits_{k=0}^{n}}
R_{n,k}^{\left(  m\right)  }\left(  t\right)  D_{m}\left(  i+k,t\right)  \equiv0\text{ }\left(
\operatorname{mod}n!\right)  .\label{cong5}%
\end{equation}

\end{corollary}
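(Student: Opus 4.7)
The plan is almost trivial once the preceding theorem is in hand: the corollary is an immediate reading of the identity
\[
\sum_{k=0}^{n}R_{n,k}^{(m)}(t)\,D_m(i+k,t)=t^{n}n!\sum_{j=0}^{i}m^{i-j}\binom{i}{j}S(i-j,n)\,D_m(j,t),
\]
so my proof simply records why the right-hand side is visibly a multiple of $n!$.

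First I would point out that the left-hand side is a polynomial in $t$ with integer coefficients (when $m$ is a positive integer). Indeed, the explicit formula (\ref{exp1}) gives $R_{n,k}^{(m)}(t)\in\mathbb{Z}[t]$ since the Whitney numbers $w_m(n-j,k)$ are integers, and $D_m(i+k,t)\in\mathbb{Z}[t]$ by (\ref{b02}) together with the integrality of $W_m(i+k,\cdot)$. Hence the left-hand side lives in $\mathbb{Z}[t]$, and it makes sense to speak of congruences modulo $n!$ coefficientwise.

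Next I would invoke (\ref{cong4}): the right-hand side is $n!$ times the polynomial $t^{n}\sum_{j=0}^{i}m^{i-j}\binom{i}{j}S(i-j,n)D_m(j,t)$, whose factors $m^{i-j}$, $\binom{i}{j}$, $S(i-j,n)$ are non-negative integers and whose factor $D_m(j,t)$ again lies in $\mathbb{Z}[t]$. Thus the right-hand side lies in $n!\,\mathbb{Z}[t]$, and the congruence (\ref{cong5}) follows at once.

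There is really no obstacle to this argument; the only point worth emphasizing is that one should interpret ``$\bmod\ n!$'' as a statement about the coefficients of the polynomial in $t$, and that (for fixed positive integer $m$) both sides indeed have integer coefficients, so no clearing of denominators is needed. Specializing $t$ to any non-negative integer value then yields an integer congruence of the same shape for the numerical values, which is the standard application for Dowling numbers.
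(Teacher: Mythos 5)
Your proposal is correct and takes exactly the same route as the paper, which simply observes that the right-hand side of (\ref{cong4}) is visibly $n!$ times an integer-coefficient polynomial in $t$. The extra care you take in checking that both sides lie in $\mathbb{Z}[t]$ (via (\ref{exp1}) and (\ref{b02})) is a worthwhile clarification of a point the paper leaves implicit, but it is not a different argument.
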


Let us give a short list of these congruences by taking $t=1$ in (\ref{cong5})
and using the Table \ref{tab1}.

\begin{gather*}
mD_{m}\left(  i\right)  +mD_{m}\left(  i+1\right)  +D_{m}\left(  i+2\right)
\equiv0\text{ }\left(  \operatorname{mod}2\right)  ,\\
\left(  4m^{2}-2\right)  D_{m}\left(  i\right)  +\left(  2m^{2}+3m\right)
D_{m}\left(  i+1\right)  +3mD_{m}\left(  i+2\right)  +D_{m}\left(  i+3\right)
\equiv0\text{ }\left(  \operatorname{mod}6\right).
\end{gather*}
\section{$r$-Dowling polynomials}

In $1984$, Broder \cite{Broder} generalized the Stirling numbers
of the second kind to the so-called $r-$Stirling numbers of the second kind $%
\genfrac{\{}{\}}{0pt}{0}{n}{k}%
_{r}$ as follows: is the number of
partitions of $\left\{  1,2,\ldots,n\right\}  $ into exactly $k$ nonempty,
disjoint subsets, such that the first $r$ elements are in distinct subsets. They may be
 defined recursively as follows
\begin{equation}
\begin{tabular}
[c]{lll}%
$
\genfrac{\{}{\}}{0pt}{0}{n}{k}%
_{r}=0,$ &  & $n<r,$\\
$
\genfrac{\{}{\}}{0pt}{0}{n}{k}%
_{r}=\delta_{k,r},$ &  & $\text{ }n=r,$\\
$%
\genfrac{\{}{\}}{0pt}{0}{n}{k}%
_{r}=k%
\genfrac{\{}{\}}{0pt}{0}{n-1}{k}%
_{r}+%
\genfrac{\{}{\}}{0pt}{0}{n-1}{k-1}%
_{r},$ &  & $n>r,$\label{tab}
\end{tabular}
\end{equation}
where $\delta_{k,r}$ is the Kronecker symbol.

The $r$-Whitney numbers of both kinds have appeared in \cite{mezo} as a common
generalization of Whitney numbers and $r$-Stirling numbers. Recently, Choen
and Jung \cite{cheon} have used these numbers to extend earlier results of Benoumhani.
They defined the $r$-Dowling polynomials by means of
\begin{equation}
D_{m,r}\left(  n,x\right)  =%
{\displaystyle\sum\limits_{k=0}^{n}}
W_{m,r}\left(  n,k\right)  x^{k},\label{rdow}%
\end{equation}
where $W_{m,r}\left(  n,k\right)  $ is the $r$-Whitney numbers of the second
kind of the Dowling lattices $Q_{n}\left(  G\right)  $ defined by%
\begin{equation}
W_{m,r}\left(  n,k\right)  =%
{\displaystyle\sum\limits_{j=k}^{n}}
\binom{n}{j}m^{j-k}\left(  r-rm\right)  ^{n-j}%
\genfrac{\{}{\}}{0pt}{}{j+r}{k+r}%
_{r},
\end{equation}
or expressed in terms of the Stirling numbers of the second kind%
\begin{equation}
W_{m,r}\left(  n,k\right)  =%
{\displaystyle\sum\limits_{j=k}^{n}}
\binom{n}{j}m^{j-k}r^{n-j}S\left(  j,k\right)  .\label{dowsti}
\end{equation}

Note that (\ref{rdow}) reduces to the Dowling polynomials by setting $r=1$ and
the $r$-Bell polynomials $B_{r}\left(  n,x\right)  $ by setting $m=1$. In
another recent paper the writer \cite{Rahmani} has shown the relationship of
$r$-Bell numbers to the Bell numbers by%
\[
B_{r}\left(  n,1\right)  =%
{\displaystyle\sum\limits_{k=0}^{n}}
\genfrac{\{}{\}}{0pt}{}{n+r}{k+r}%
_{r}=%
{\displaystyle\sum\limits_{k=0}^{r}}
s\left(  r,k\right)  \phi_{n+k}.
\]

Hence we have%
\begin{equation}
B_{r}\left(  n,x\right)  =%
{\displaystyle\sum\limits_{k=0}^{n}}
\genfrac{\{}{\}}{0pt}{}{n+r}{k+r}%
_{r}x^{k}=%
{\displaystyle\sum\limits_{k=0}^{r}}
s\left(  r,k\right)  x^{-r}\phi_{n+k}\left(  x\right).\label{rbellbell}
\end{equation}

In this section, we show all the results of section $2$ concerning the Dowling
polynomials can be extended to $r$-Dowling polynomials. In particular, the relationship of $r$-Dowling polynomials to the Bell polynomials.
\begin{theorem}
The $r$-Dowling polynomials may be expressed in terms of the Bell polynomials%
\begin{equation}
D_{m,r}\left(  n,x\right)  =%
{\displaystyle\sum\limits_{j=0}^{n}}
\binom{n}{j}m^{j}r^{n-j}\phi_{j}\left(  \frac{x}{m}\right)  .\label{rdowbel}%
\end{equation}

\end{theorem}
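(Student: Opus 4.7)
The approach is a direct generalization of the proof of (\ref{resulta1}), which is the special case $r=1$. The plan is to combine the definition (\ref{rdow}) of $D_{m,r}(n,x)$ with the representation (\ref{dowsti}) of $W_{m,r}(n,k)$ in terms of the classical Stirling numbers of the second kind, and then exchange the order of summation so that the inner sum reassembles into a Bell polynomial.

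Concretely, substituting (\ref{dowsti}) into (\ref{rdow}) yields
\[
D_{m,r}(n,x) = \sum_{k=0}^{n} \sum_{j=k}^{n} \binom{n}{j} m^{j-k} r^{n-j} S(j,k)\, x^{k}.
\]
Swapping the order (legal because the index set $\{(j,k) : 0 \le k \le j \le n\}$ is symmetric in the two descriptions), pulling $\binom{n}{j} m^{j} r^{n-j}$ outside, and absorbing $m^{-k} x^{k}$ as $(x/m)^{k}$ leaves the inner sum
\[
\sum_{k=0}^{j} S(j,k) \left(\frac{x}{m}\right)^{k} = \phi_{j}\!\left(\frac{x}{m}\right),
\]
by the very definition of the Bell polynomial. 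Reassembling these pieces produces (\ref{rdowbel}).

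The argument is essentially bookkeeping, so no step is genuinely obstacle-prone; the only thing worth double-checking is the change in the limits of summation and the careful treatment of the $m^{-k}$ factor. As a consistency check, setting $r=1$ in the proposed identity recovers (\ref{resulta1}), which is reassuring since the present theorem is advertised as the $r$-analogue of that result.
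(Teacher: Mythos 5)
Your proposal is correct and follows exactly the route the paper takes: the paper's proof is the one-line instruction to combine (\ref{rdow}) with (\ref{dowsti}), and your substitution, interchange of summation, and recognition of the inner sum as $\phi_j\left(\frac{x}{m}\right)$ is precisely the computation that instruction leaves implicit. Nothing is missing.
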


\begin{proof}
By using (\ref{rdow}) and (\ref{dowsti}), we get the result.
\end{proof}

Now we want to generalize (\ref{c02}); setting $m=1$ in $\left(  \ref{rdowbel}\right)
,$ we obtain the well-known result (see \cite{mezo1})
\[
B_{r}\left(  n,x\right)  =%
{\displaystyle\sum\limits_{j=0}^{n}}
\binom{n}{j}r^{n-j}\phi_{j}\left(  x\right)  .
\]
It follow from (\ref{rbellbell}) that

\begin{corollary}%
\begin{equation}
{\displaystyle\sum\limits_{k=0}^{n}}
\binom{n}{k}r^{n-k}x^{r}\phi_{k}\left(  x\right)  =%
{\displaystyle\sum\limits_{k=0}^{r}}
s\left(  r,k\right)  \phi_{n+k}\left(  x\right).  \label{man}
\end{equation}

\end{corollary}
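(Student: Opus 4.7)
The plan is to read off the identity by equating two expressions for the same object, the $r$-Bell polynomial $B_{r}(n,x)$, that are already available in the excerpt. Concretely, the special case $m=1$ of the newly established formula $(\ref{rdowbel})$ gives
\[
B_{r}(n,x)=\sum_{j=0}^{n}\binom{n}{j}r^{n-j}\phi_{j}(x),
\]
while the expression $(\ref{rbellbell})$ provides
\[
B_{r}(n,x)=\sum_{k=0}^{r}s(r,k)\,x^{-r}\phi_{n+k}(x).
\]

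First I would set these two right-hand sides equal to each other (both describe the same polynomial $B_{r}(n,x)$). Then I would clear the $x^{-r}$ factor by multiplying through by $x^{r}$. This yields exactly the claimed identity
\[
\sum_{k=0}^{n}\binom{n}{k}r^{n-k}x^{r}\phi_{k}(x)=\sum_{k=0}^{r}s(r,k)\,\phi_{n+k}(x),
\]
with the dummy $j$ on the left simply renamed to $k$.

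There is essentially no obstacle here; the only thing to be careful about is that $(\ref{rbellbell})$ is stated with an $x^{-r}$ factor, so the multiplication by $x^{r}$ is needed to match the statement of the corollary exactly. Both ingredients are already proved earlier in the excerpt, so the proof reduces to this two-line identification and the entire content of the corollary is the comparison of two formulas for $B_{r}(n,x)$.
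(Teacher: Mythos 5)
Your proposal is correct and is exactly the paper's argument: the corollary is stated there as an immediate consequence of setting $m=1$ in (\ref{rdowbel}) (which gives $B_{r}(n,x)=\sum_{j}\binom{n}{j}r^{n-j}\phi_{j}(x)$) and comparing with (\ref{rbellbell}), then clearing the $x^{-r}$ factor. Nothing is missing.
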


\begin{example}
In \cite{mansour}, Mansour and Shattuck defined a sequence $(C_{n})_{n\geq1}$ with
four parameters by means of
\[
C_{n}\left(  a,b,c,d\right)  =abC_{n-1}\left(  a,b,c,d\right)  +cC_{n-1}%
\left(  a+d,b,c,d\right)  ,
\]
where $C_{0}\left(  a,b,c,d\right)  =1$ and they derived some formulas
involving $C_{i}$ and Bell polynomials $\phi_{i}\left(  x\right)  $ defined by%
\[
C_{n}\left(  a,b,c,d\right)  =b^{n}%
{\displaystyle\sum\limits_{j=0}^{n}}
a^{n-j}d^{j}\binom{n}{j}\phi_{j}\left(  \frac{c}{bd}\right)  .
\]

Now, if we will assume that $d$ divides $a$, then we deduce the following
explicit formula%
\[
C_{n}\left(  a,b,c,d\right)  =\frac{\left(  bd\right)  ^{n+a/d}}{c^{a/d}}%
{\displaystyle\sum\limits_{k=0}^{a/d}}
s\left(  a/d,k\right)  \phi_{n+k}\left(  \frac{c}{bd}\right)  ,
\]
by setting $x=c/bd$ and $r=a/d$ in (\ref{man}).

In particular, for $l\geq1$%
\[
l^{n}C_{n}\left(  1,1,\frac{1}{l},\frac{1}{l}\right)  =\frac{1}{l^{n}}%
{\displaystyle\sum\limits_{k=0}^{l}}
s\left(  l,k\right)  \phi_{n+k}.%
\]

\end{example}

\begin{corollary}%
\begin{equation}
m^{n}\phi_{n}\left(  \frac{x}{m}\right)  =%
{\displaystyle\sum\limits_{k=0}^{n}}
\left(  -1\right)  ^{n-k}\binom{n}{k}r^{n-k}D_{m,r}\left(  k,x\right).
\label{beldow}%
\end{equation}

\end{corollary}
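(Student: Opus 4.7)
The statement is a direct inversion of formula (\ref{rdowbel}). Setting $\alpha_j := m^j\phi_j(x/m)$ and $\beta_n := D_{m,r}(n,x)$, the preceding theorem reads $\beta_n = \sum_{j=0}^n \binom{n}{j} r^{n-j} \alpha_j$, which is an $r$-weighted binomial transform. The inverse of such a transform is classical and yields $\alpha_n = \sum_{k=0}^n \binom{n}{k}(-r)^{n-k} \beta_k$; substituting the definitions of $\alpha$ and $\beta$ gives exactly (\ref{beldow}).

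To justify the inversion in a self-contained way, my plan is to pass to exponential generating functions. Since $\sum_{n\geq 0} m^n \phi_n(x/m)\, z^n/n! = \exp\bigl((x/m)(e^{mz}-1)\bigr)$, multiplying (\ref{rdowbel}) by $z^n/n!$ and summing over $n\geq 0$ produces
\[
\sum_{n\geq 0} D_{m,r}(n,x)\,\frac{z^n}{n!} \;=\; e^{rz}\exp\!\left(\tfrac{x}{m}(e^{mz}-1)\right).
\]
Multiplying both sides by $e^{-rz}$ and extracting the coefficient of $z^n/n!$: the right-hand side becomes $m^n \phi_n(x/m)$, while the Cauchy product on the left yields $\sum_{k=0}^n \binom{n}{k}(-r)^{n-k}D_{m,r}(k,x)$. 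Equating the two expressions completes the proof.

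There is no real obstacle: the corollary is a formal consequence of (\ref{rdowbel}) together with the observation that multiplication of an exponential generating function by $e^{\pm rz}$ realizes the $\pm r$-weighted binomial (inverse) transform. An alternative purely combinatorial route would substitute (\ref{rdowbel}) into the right-hand side of (\ref{beldow}) and collapse the double sum via the identity $\sum_{k=j}^{n}(-1)^{n-k}\binom{n}{k}\binom{k}{j} r^{n-k} r^{k-j} = \delta_{n,j}$, but the generating-function argument is cleaner.
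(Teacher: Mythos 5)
Your proposal is correct and matches the paper's (implicit) route: the paper states (\ref{beldow}) without proof as the inverse of the weighted binomial transform (\ref{rdowbel}), exactly as you identify, after normalizing by $r^n$ so that $D_{m,r}(n,x)/r^n=\sum_{j}\binom{n}{j}(m/r)^{j}\phi_{j}(x/m)$ falls under the inversion formula already quoted in Section 3. Your generating-function justification and the orthogonality identity $\sum_{k=j}^{n}(-1)^{n-k}\binom{n}{k}\binom{k}{j}r^{n-j}=\delta_{n,j}$ are both valid ways to make that inversion self-contained.
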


To generalize (\ref{c01}), substituting $m=1$ in (\ref{beldow}) and using
(\ref{rbellbell}), we get

\begin{corollary}
The Bell polynomials satisfy the relation%
\begin{equation}
x^{r}\phi_{n}\left(  x\right)  =%
{\displaystyle\sum\limits_{k=0}^{n}}
{\displaystyle\sum\limits_{j=0}^{r}}
\left(  -1\right)  ^{n-k}\binom{n}{k}r^{n-k}s\left(  r,j\right)  \phi
_{k+j}\left(  x\right)  .\label{mout}%
\end{equation}

\end{corollary}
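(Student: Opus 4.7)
The plan is to follow the corollary's own hint and chain together the two identities (\ref{beldow}) and (\ref{rbellbell}). First I would set $m=1$ in (\ref{beldow}), which collapses the left-hand side to $\phi_n(x)$ and converts $D_{m,r}(k,x)$ into the $r$-Bell polynomial $D_{1,r}(k,x)=B_{r}(k,x)$, yielding
\[
\phi_{n}(x)=\sum_{k=0}^{n}(-1)^{n-k}\binom{n}{k}r^{n-k}B_{r}(k,x).
\]

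Next I would invoke the expansion (\ref{rbellbell}) to rewrite each $r$-Bell polynomial in terms of ordinary Bell polynomials, namely
\[
B_{r}(k,x)=\sum_{j=0}^{r}s(r,j)\,x^{-r}\phi_{k+j}(x),
\]
and substitute this into the previous display. The factor $x^{-r}$ does not depend on $k$ or $j$, so it pulls out of the double sum.

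Finally, I would clear the denominator by multiplying both sides by $x^{r}$, which produces exactly the claimed identity (\ref{mout}). The only thing worth a moment's care is checking that the replacement $D_{1,r}(k,x)=B_{r}(k,x)$ is legitimate (it is, by the discussion following (\ref{dowsti}) where the paper observes that the $r$-Bell polynomial is the $m=1$ specialization of $D_{m,r}$) and that the order of the finite sums over $k$ and $j$ may be interchanged, which is automatic. There is no real obstacle here; the corollary is a two-step formal consequence of identities already established.
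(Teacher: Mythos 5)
Your proposal is correct and is exactly the derivation the paper intends: the corollary is stated in the text as following from ``substituting $m=1$ in (\ref{beldow}) and using (\ref{rbellbell})'', which is precisely your two-step chain of specializing $D_{1,r}(k,x)=B_{r}(k,x)$ and then clearing the $x^{-r}$ factor. Nothing further is needed.
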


We note that the identity (\ref{mout}) can be viewed as  the inverse Stirling transform
of (see for instance \cite{mihoubi})
\[
\phi_{n+r}\left(  x\right)  =%
{\displaystyle\sum\limits_{k=0}^{n}}
{\displaystyle\sum\limits_{j=0}^{r}}
j^{n-k}S\left(  r,j\right)  \binom{n}{k}x^{j}\phi_{k}\left(  x\right).
\]

Formulas analogous to (\ref{f01}), (\ref{f02}) can be derived. We omit all proofs.

\begin{theorem}
The following results holds true%
\begin{align*}%
{\displaystyle\sum\limits_{k=0}^{n}}
\binom{n}{k}\binom{n+k}{k}\left(  \frac{m}{r}\right)  ^{k}\phi_{k}\left(
\frac{x}{m}\right)    & =%
{\displaystyle\sum\limits_{k=0}^{n}}
\binom{n}{k}\binom{n+k}{k}\frac{1}{r^{k}}D_{m,r}\left(  k,x\right),  \\%
{\displaystyle\sum\limits_{k=0}^{n}}
\binom{n}{k}\binom{n+k}{k}x^{r}\frac{\phi_{k}\left(  x\right)  }{r^{k}}  & =%
{\displaystyle\sum\limits_{k=0}^{n}}
\binom{n}{k}\binom{n+k}{k}\frac{1}{r^{k}}%
{\displaystyle\sum\limits_{j=0}^{r}}
s\left(  r,j\right)  \phi_{k+j}\left(  x\right).  \\
&
\end{align*}

\end{theorem}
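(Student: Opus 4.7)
The plan is to obtain both identities as immediate specializations of Chen's binomial transform identity (\ref{sim1}), applied to two binomial transform pairs that are already visible in the material just preceding the theorem.

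For the first identity, I would start by rewriting the $r$-Dowling expansion (\ref{rdowbel}) by factoring out $r^{n}$:
\[
\frac{D_{m,r}(n,x)}{r^{n}}=\sum_{j=0}^{n}\binom{n}{j}\left(\frac{m}{r}\right)^{j}\phi_{j}\!\left(\frac{x}{m}\right).
\]
This exhibits $\beta_{n}:=r^{-n}D_{m,r}(n,x)$ as the binomial transform of $\alpha_{k}:=(m/r)^{k}\phi_{k}(x/m)$. Substituting this pair into (\ref{sim1}) with $l=s=n$, so that $\alpha_{n+k-s}=\alpha_{k}$, $\beta_{l+k-s}=\beta_{k}$, and $\binom{n+k}{n}=\binom{n+k}{k}$, I obtain the first identity directly (the coefficient $1/r^{k}$ on the right-hand side comes from $\beta_{k}=r^{-k}D_{m,r}(k,x)$).

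For the second identity, I would use (\ref{man}) in the form
\[
\sum_{k=0}^{n}\binom{n}{k}\,\frac{x^{r}\phi_{k}(x)}{r^{k}}=\frac{1}{r^{n}}\sum_{j=0}^{r}s(r,j)\,\phi_{n+j}(x),
\]
which identifies the pair $\alpha_{k}:=r^{-k}x^{r}\phi_{k}(x)$ and $\beta_{n}:=r^{-n}\sum_{j=0}^{r}s(r,j)\,\phi_{n+j}(x)$ as another binomial transform pair. Feeding these into (\ref{sim1}) with $l=s=n$ yields the second identity, exactly in the same mechanical way as the first.

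There is essentially no real obstacle beyond careful bookkeeping. The main task is the initial repackaging of (\ref{rdowbel}) and (\ref{man}) so that the binomial transform structure becomes manifest; once the pairs $(\alpha_{k},\beta_{n})$ are fixed, both identities are routine Simons-type specializations of (\ref{sim1}) with $l=s=n$, entirely parallel to the derivation of (\ref{f01}) and (\ref{f02}) in Section~3. The only subtlety is keeping track of the factors $m/r$, $1/r^{k}$, and $x^{r}$ through the shift $n+k-s\mapsto k$, which is why the proofs are omitted in the statement.
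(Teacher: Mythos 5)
The paper gives no proof of this theorem (it says only that the formulas are analogous to (\ref{f01}) and (\ref{f02}) and omits all proofs), and the route you take --- exhibiting $r^{-n}D_{m,r}(n,x)$ as the binomial transform of $(m/r)^{k}\phi_{k}(x/m)$ via (\ref{rdowbel}), and $r^{-n}\sum_{j}s(r,j)\phi_{n+j}(x)$ as the binomial transform of $r^{-k}x^{r}\phi_{k}(x)$ via (\ref{man}), then specializing Chen's identity (\ref{sim1}) at $l=s=n$ --- is exactly the intended one. Both transform pairs are set up correctly.

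There is, however, one concrete point you pass over: (\ref{sim1}) with $l=s=n$ reads
\[
\sum_{k=0}^{n}\binom{n}{k}\binom{n+k}{k}\alpha_{k}=\sum_{k=0}^{n}\binom{n}{k}\binom{n+k}{k}\left(  -1\right)  ^{n-k}\beta_{k},
\]
so your substitutions necessarily produce the factor $\left(  -1\right)  ^{n-k}$ on the right-hand sides, exactly as it appears in (\ref{f01}) and (\ref{f02}). The theorem as printed omits this factor, and without it both identities are false: for $n=1$, $m=r=1$ the first left-hand side is $\phi_{0}(x)+2\phi_{1}(x)=1+2x$, while the right-hand side without the sign is $D_{1,1}(0,x)+2D_{1,1}(1,x)=1+2(1+x)=3+2x$; with the sign $\left(  -1\right)  ^{1-k}$ it is $-1+2(1+x)=1+2x$, as required. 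So your argument is sound but proves the corrected statements (with $\left(  -1\right)  ^{n-k}$ inserted on the right-hand sides), not the ones displayed; the claim that the substitution yields the first identity ``directly'' glosses over this. You should explicitly flag the missing alternating sign as an error in the statement rather than assert that (\ref{sim1}) delivers the printed formulas.
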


\begin{theorem}
The $r$-Dowling polynomials have the Hankel transform%
\[
H_{n}\left(  \frac{D_{m,r}\left(  n,x\right)  }{r^{n}}\right)  =\left(
\frac{mx}{r^{2}}\right)  ^{\binom{n+1}{2}}%
{\displaystyle\prod\limits_{k=1}^{n}}
k!.
\]
\end{theorem}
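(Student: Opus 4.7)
The plan is to reduce this Hankel-transform computation to the known result $H_{n}\bigl(\phi_{n}(x)\bigr)=x^{\binom{n+1}{2}}\prod_{k=1}^{n}k!$ (used already in the proof of the unshifted Dowling case) by combining two standard ingredients: invariance of the Hankel transform under binomial transform, and the bilinearity of the determinant in rows and columns.

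First I would divide the identity (\ref{rdowbel}) by $r^{n}$ to write
\[
\frac{D_{m,r}(n,x)}{r^{n}}=\sum_{j=0}^{n}\binom{n}{j}\left(\frac{m}{r}\right)^{j}\phi_{j}\!\left(\frac{x}{m}\right),
\]
which exhibits the sequence $\bigl(D_{m,r}(n,x)/r^{n}\bigr)_{n\ge 0}$ as the binomial transform of the sequence $\alpha_{n}:=(m/r)^{n}\phi_{n}(x/m)$. Since Hankel transforms are invariant under binomial transforms (as recalled earlier in the paper following \cite{layman}), this gives
\[
H_{n}\!\left(\frac{D_{m,r}(n,x)}{r^{n}}\right)=H_{n}\!\bigl((m/r)^{n}\phi_{n}(x/m)\bigr).
\]

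Second, I would handle the scalar factor $(m/r)^{i+j}$ appearing in the entry at position $(i,j)$ of the latter Hankel matrix. Pulling $(m/r)^{i}$ out of row $i$ and $(m/r)^{j}$ out of column $j$ contributes a total factor of $(m/r)^{2(0+1+\cdots+n)}=(m/r)^{2\binom{n+1}{2}}$, leaving the determinant $H_{n}\bigl(\phi_{n}(x/m)\bigr)$. Applying Radoux's evaluation $H_{n}\bigl(\phi_{n}(y)\bigr)=y^{\binom{n+1}{2}}\prod_{k=1}^{n}k!$ with $y=x/m$ yields
\[
H_{n}\!\left(\frac{D_{m,r}(n,x)}{r^{n}}\right)=\left(\frac{m}{r}\right)^{\!2\binom{n+1}{2}}\left(\frac{x}{m}\right)^{\!\binom{n+1}{2}}\prod_{k=1}^{n}k!=\left(\frac{mx}{r^{2}}\right)^{\!\binom{n+1}{2}}\prod_{k=1}^{n}k!,
\]
which is the desired formula.

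There is no real obstacle: each step is routine once (\ref{rdowbel}) is written in the form above. The only point that requires a line of care is the bookkeeping on the exponents of $m/r$ and $x/m$ to verify that they collapse to a single $\bigl(mx/r^{2}\bigr)^{\binom{n+1}{2}}$, and the observation that the Radoux formula can be cited in the form used for the corollary at the end of Section~3.
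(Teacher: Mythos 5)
Your proof is correct and is essentially the argument the paper intends: it omits the proof of this theorem, but the method you use (write $D_{m,r}(n,x)/r^{n}$ via (\ref{rdowbel}) as the binomial transform of $(m/r)^{n}\phi_{n}(x/m)$, invoke the invariance of the Hankel transform under binomial transforms, extract the row and column factors $(m/r)^{2\binom{n+1}{2}}$, and apply Radoux's evaluation of $H_{n}(\phi_{n}(y))$) is exactly the one the paper gives for the $r=1$ corollary $H_{n}(D_{m}(n,x))=(mx)^{\binom{n+1}{2}}\prod_{k=1}^{n}k!$ in Section~3. Your exponent bookkeeping $(m/r)^{2\binom{n+1}{2}}(x/m)^{\binom{n+1}{2}}=(mx/r^{2})^{\binom{n+1}{2}}$ is also correct and consistent with the paper's subsequent remark that $H_{n}(D_{m,r}(n,x))=(mx)^{\binom{n+1}{2}}\prod_{k=1}^{n}k!$.
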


In particular, $H_{n}\left(  D_{m,r}\left(  n,x\right)  \right)  =H_{n}\left(
D_{m}\left(  n,x\right)  \right)  =\left(  mx\right)  ^{\binom{n+1}{2}}%
{\displaystyle\prod\limits_{k=1}^{n}}
k!.$

\end{document}